\numberwithin{equation}{section}
\newtheorem {definition} {Definition} [section]
\newtheorem {theorem} [definition]{Theorem}
\newtheorem {proposition} {Proposition}[section]
\newtheorem {lemma}  [proposition]{Lemma}
\newcommand{\supp}{{\rm supp}}
\newcommand{\diva}{{\rm div}}
\newcommand{\rn}{\mathbb{R}^n}
\newcommand{\half}{\frac{1}{2}}
\begin{document}
\setlength{\parindent}{4ex} \setlength{\parskip}{1ex}
\setlength{\oddsidemargin}{0mm} \setlength{\evensidemargin}{0mm}
\title{{Well-posedness of the Viscous Boussinesq System in Besov Spaces of Negative Order Near Index
$s=-1$ }}

\author{Chao Deng, Shangbin Cui\footnote{E-mail:
deng315@yahoo.com.cn, cuisb3@yahoo.com.cn}\\ [0.2cm] {\small Department of Mathematics,
Sun Yat-Sen
University, Guangzhou, }\\
{\small Guangdong 510275, P. R. of China }}
\date{}

\maketitle
\begin{abstract}
This paper is concerned with well-posedness of the Boussinesq
system. We prove that the $n$ ($n\ge2$) dimensional Boussinesq
system is well-psoed for small initial data $(\vec{u}_0,\theta_0)$
($\nabla\cdot\vec{u}_0=0$) either in
$({B}^{-1}_{\infty,1}\cap{B^{-1,1}_{\infty,\infty}})\times{B}^{-1}_{p,r}$
or in
${B^{-1,1}_{\infty,\infty}}\times{B}^{-1,\varepsilon}_{p,\infty}$ if
$r\in[1,\infty]$, $\varepsilon>0$ and $p\in(\frac{n}{2},\infty)$,
where $B^{s,\varepsilon}_{p,q}$ ($s\in\mathbb{R}$, $1\leq
p,q\leq\infty$, $\varepsilon>0$) is the logarithmically modified
Besov space to the standard Besov space $B^{s}_{p,q}$. We also prove
that this system is well-posed for small initial data in
$({B}^{-1}_{\infty,1}\cap{B^{-1,1}_{\infty,\infty}})\times({B}^{-1}_{\frac{n}{2},1}\cap{B^{-1,1}_{\frac{n}{2},\infty}})$.

{\bf Keywords:} Boussinesq system; Navier-Stokes equations;
well-posedness; Besov spaces.

{\bf Mathematics Subject Classification: \,} 76D05, 35Q30, 35B40.
\end{abstract}
\maketitle
\section{Introduction}


 In this paper we will discuss the Cauchy problem for the normalized
 $n$-dimensional viscous Boussinesq system which describes the natural convection in a viscous
 incompressible fluid as follows:
\begin{align}
 {\vec{u}}_t+(\vec{u}\cdot\nabla)\vec{u}+\nabla P&=\Delta\vec{u}+\theta{\vec{a}} \hskip2cm
 \text{  in } \mathbb{R}^{n}\times (0,\infty),\label{eq4.1.1}\\
 \diva\vec{u}&=0 \hskip3.21cm\text{ in }\mathbb{R}^n\times(0,\infty),\label{eq4.1.2}\\
 \theta_t+\vec{u}\cdot\nabla\theta&=\Delta \theta \hskip2.89  cm\text{ in }\mathbb{R}^n\times (0,\infty),\label{eq4.1.3}
 \\(\vec{u}(\cdot,t),\theta(\cdot,t))|_{t=0}&=(\vec{u}_0(\cdot),\theta_0(\cdot)) \hskip1.38 cm\text{ in }\mathbb{R}^n,\label{eq4.1.4}
 \end{align}
 where $\vec{u}=(u_1(x,t),u_2(x,t),\cdots,u_n(x,t))\in\mathbb{R}^n$ and $P=P(x,t)\in\mathbb{R}$
 denote the unknown vector  velocity and the unknown
 scalar pressure of the fluid, respectively.  $\theta=\theta(x,t)\in\mathbb R$ denotes the density or the temperature.
 $\theta{\vec{a}}$  in \eqref{eq4.1.1} takes into
 account the influence of the gravity and the stratification on the motion of the
 fluid. The whole system is considered under initial condition $(\vec{u}_0,\theta_0)=(\vec{u}_0(x),\theta_0(x))\in\mathbb{R}^{n+1}$.

 The Boussinesq system is extensively used in the atmospheric sciences  and oceanographic turbulence
  (cf. \cite{Majda03}
 and references cited therein). Due to its close relation to fulids, there are a lot of works related to various aspects of this system. Among the fruitful
 results we only cite papers on well-posedness.
 In 1980, Cannon and DiBenedetto in \cite{Cannon80} established well-posedness of the full viscous Boussinesq system in Lebesgue space within the framework of Kato semigroup.
 Around 1990, Mirimoto, Hishida and Kagei have investigated weak
 solutions of this system in  \cite{Morimoto92}, \cite{Hishida91} and
 \cite{Kagei93}. Well-posedness results in pseudomeasure-type space and  weak $L^p$ space,  etc. can be found in \cite{Ferreira08} and
 references cited therein. Recently, the two dimensional Boussinesq system with partial viscous terms has drawn a lot of
attention, see \cite{Abidi07,Chae06,Danchin08,Hmidi10} and
references cited therein.

In this paper, we aim at achieving the lowest regularity results of
the full viscous Boussinesq system with dimension $n\ge2$. Though it
is hard to deal with the coupled term $\vec{u}\nabla\theta$, we
succeed  in finding a suitable product space with regular index
being almost $-1$ in which the Boussinesq system is well-posed. More
precisely, we prove that if $(\vec{u}_0,\theta_0)\in
({B}^{-1}_{\infty,1}\cap{B^{-1,1}_{\infty,\infty}})\times({B}^{-1}_{\frac{n}{2},1}\cap{B^{-1,1}_{\frac{n}{2},\infty}})$
satisfying $\diva\vec{u}_0=0$, where $B^{-1,1}_{p,\infty}$ ($1\leq
p\leq\infty$) is the logarithmically modified Besov space to the
standard Besov space $B^{-1}_{p,\infty}$ (see definition 1.1 below),
then there exists a local solution to Eqs.
 \eqref{eq4.1.1}$\sim$\eqref{eq4.1.4}.
%
We also prove that if $\theta_0$ belongs to $B^{-1}_{p,r}$ with
$p\in(\frac{n}{2},\infty)$ and $r\in[1,\infty]$
  and  $\vec{u}_0$ belongs to ${B}^{-1}_{\infty,1}\cap{B^{-1,1}_{\infty,\infty}}$
  satisfying the divergence free condition, then there exists a local solution to
  Eqs. \eqref{eq4.1.1}$\sim$\eqref{eq4.1.4}.
 The method we use here is essentially frequency localization.

As usual, we use the well-known fixed point arguments and hence we
invert Eqs. \eqref{eq4.1.1} $\sim$ \eqref{eq4.1.4} into the
corresponding integral equations:
\begin{align}
\vec{u}&=e^{t\Delta}\vec{u}_0-\int_{0}^te^{(t-s)\Delta}\mathbb{P}(\vec{u}\cdot\nabla)\vec{u}ds
        +\int_{0}^te^{(t-s)\Delta}\mathbb{P}(\theta{\vec{a}}){ds},\label{eq4.1.5}\\
   \theta&=e^{t\Delta}\theta_0-\int_{0}^te^{(t-s)\Delta}(\vec{u}\cdot\nabla{\theta})ds, \label{eq4.1.6}
  \end{align}
 where $\mathbb{P}$ is the Helmholtz projection operator given by
 $\mathbb{P}=I+\nabla(-\Delta)^{-1}\diva$ with $I$ representing
 the unit operator. In what follows, we shall regard Eqs. \eqref{eq4.1.5}
and \eqref{eq4.1.6} as a fixed point system for the map
$$\mathfrak{J}:\ (\vec{u},\theta)\mapsto
\mathfrak{J}(\vec{u},\theta)=(\mathfrak{J}_{1}(\vec{u},\theta),
\mathfrak{J}_{2}(\vec{u},\theta)),$$ where
$\mathfrak{J}_{1}(\vec{u},\theta)$ and
$\mathfrak{J}_{2}(\vec{u},\theta)$ denote the right-hand sides of
(1.5) and (1.6), respectively.

\vskip.1cm Before showing our main results of this paper, let us
first recall the nonhomogeneous littlewood-Paley decomposition by
means of a sequence of operators $(\triangle_j)_{j\in\mathbb{Z}}$
and then we define the Besov type space $B^{s,\alpha}_{p,r}$ and the
corresponding Chemin-Lerner type space
$\tilde{L}^{\rho}(B^{s,\alpha}_{p,r})$.

To this end, let $\gamma>1$ and $(\varphi,\chi)$ be a couple of smooth functions
valued in $[0,1]$, such that $\varphi$ is supported in the shell
$\{\xi\in\mathbb{R}^n; \gamma^{-1}\le|\xi|\le2\gamma\}$,\ $\chi$ is
supported in the ball $\{\xi\in\mathbb{R}^n; |\xi|\le\gamma\}$ and
\begin{align*}
\chi(\xi)+\sum_{q\in\mathbb{N}}\varphi(2^{-q}\xi)=1,\quad \forall\
\xi\in\mathbb{R}^n.
\end{align*}
For $u\in \mathcal{S}'(\mathbb{R}^n)$, we define nonhomogeneous
dyadic blocks as follows:
\begin{align*}
&\triangle_qu:=0 \quad\text{ if } q\le -2,\\
&\triangle_{-1}u:=\chi(D)u=\tilde{h}\ast{u} \quad\text{ with } \tilde{h}:=\mathcal{F}^{-1}\chi,\\
&\triangle_qu:=\varphi(2^{-q}D)u=2^{qn}\int{h}(2^{q}y)u(x-y)dy
\quad\text{ with } h:=\mathcal{F}^{-1}\varphi \text{ if }q\ge0.
\end{align*}
One can prove that
\begin{align*}
u=\sum_{q\ge-1}\triangle_qu\quad\text{ in }
\mathcal{S}'(\mathbb{R}^n)
\end{align*}
for all tempered distribution $u$. The right-hand side is called
nonhomogeneous Littlewood-Paley decomposition of $u$. It is also
convenient to introduce the following partial sum operator:
\begin{align*}
S_qu := \sum_{ p\le{q-1}} \triangle_pu.
\end{align*}
Obviously we have $S_0u = \triangle_{-1}u$. Since $\varphi(\xi) =
\chi(\xi/2) - \chi(\xi)$ for all $\xi\in \mathbb{R}^n$, one can
prove that
 \begin{align*}S_qu = \chi(2^{-q}D)u = \int\tilde{h}(2^qy)u(x-y)dy\quad\text{ for all } q\in\mathbb{N}.\end{align*}
Let $\gamma
= 4/3$. Then we have the following result, i.e. for any $u\in
\mathcal{S}'(\rn)$ and $v \in \mathcal{S}'(\rn)$, there holds
\begin{align}
 \triangle_k\triangle_qu&\equiv 0 \quad\text{ for } |k-q|\ge2\label{eq4.1.8},\\
\triangle_k(S_{q-1}u\triangle_qv)&\equiv 0 \quad\text{ for }
|k-q|\ge5\label{eq4.1.9},\\
\triangle_k(\triangle_qu\triangle_{q+l}v)&\equiv 0  \quad\text{ for
} |l|\le 1,\;\; k\ge q+4.\label{eq4.1.10}
\end{align}

\vskip.07cm
\begin{definition}\label{def4.1.1} Let $T>0$, $-\infty<s<\infty$ and $1\le p$,
$r$, $\rho\le\infty$.

\textsc{(1)}\ \ We say that a tempered distribution $f\in
B^{s,\alpha}_{p,r}$ if and only if
\begin{align}\label{eq4.1.11}
\Big(\sum_{q\ge-1}2^{qrs}(3+q)^{\alpha{r}}\|\triangle_qf\|_{p}^r\Big)^{\frac{1}{r}}<\infty
\end{align}
(with the usual convention for $r=\infty$).

\textsc{(2)}\ \ We say that a tempered distribution $u\in
\tilde{L}^\rho_T(B^{s,\alpha}_{p,r})$ if and only if
\begin{align}
\label{eq4.1.12}
 \|u\|_{\tilde{L}^{\rho}_T(B^{s,\alpha}_{p,r})}&:=
 \Big(\sum_q2^{qrs}(3+q)^{\alpha r }\|\triangle_qu\|^r_{L^\rho(0,T;L^p_x)}\Big)^{\frac{1}{r}}<\infty.
\end{align}
\end{definition}

\textbf{\large{Remarks.}}\ (i)\ \ The definition (1) is essentially
due to Yoneda \cite{Yoneda10} where he considered the homogeneous
version of the space $B^{s,\alpha}_{p,r}$ (see also remarks there).
Note that by using the heat semigroup characterization of these
spaces (see Lemma \ref{lem4.4.1} in Section 4), we see that
$B^{-1,1}_{\infty,\infty}$ coincides with the space
$B^{-1(\ln)}_{\infty,\infty}$ considered by the second author in his
recent work \cite{Cui11}. The definition (2) in the case $\alpha=0$
(note that $B^{s,0}_{q,r}=B^{s}_{q,r}$) is due to
 Chermin etc. (cf. \cite{Chemin99,Danchin05}).

(ii)\ \ Similar to the case $\alpha=0$ (see \cite{Danchin05} and
references cited therein), by using the Minkowski inequality we
see that for $0\le\alpha\le\beta<\infty$,
\begin{align*}
\|f\|_{\tilde{L}^\rho_T(B^{s,\alpha}_{p,r})}\le\|f\|_{{L}^\rho_TB^{s,\beta}_{p,r}}
\ \ \text{if }r\ge\rho, \quad
\|f\|_{{L}^\rho_TB^{s,\alpha}_{p,r}}\!\le\!\|f\|_{\tilde{L}^\rho_T(B^{s,\beta}_{p,r})}
\ \ \text{if }r\le\rho.
\end{align*}

\medskip\medskip
We now state the main results. In the first two results we consider
the case where the first component $\vec u_0$ of the initial data
lies in the space $B^{-1}_{\infty,1}\cap B^{-1,1}_{\infty,\infty}$.
In the third result we consider the case where $\vec u_0$ lies in
the less regular space $B^{-1,1}_{\infty,\infty}$. As we shall see,
in this case we need the second component $\theta_0$ of the initial
data to lie in a more regular space.

\begin{theorem}\label{thm4.1.2}
Let $n\ge2$. Given  $T>0$, there exist $\mu_1$, $\mu_2>0$ such
that for any $(\vec{u}_0,\theta_0)\in (B^{-1}_{\infty,1}\cap
B^{-1,1}_{\infty,\infty})\times (B^{-1}_{\frac{n}{2},1}\cap
B^{-1,1}_{\frac{n}{2},\infty})$ satisfying
\begin{equation*}
\left\{\begin{aligned}
&\|\vec{u}_0\|_{B^{-1}_{\infty,1}}+\|\vec{u}_0\|_{{B^{-1,1}_{\infty,\infty}}}
\le \mu_1 \ \ \hbox { and }\ \ \diva\vec{u}_0=0,
\\ &\|\theta_0\|_{B^{-1}_{{\frac{n}{2}},1}}+
\|\theta_0\|_{{B^{-1,1}_{{\frac{n}{2}},\infty}}}\le \mu_2,
\end{aligned}\right.
\end{equation*}
 the Boussinesq system  has a unique solution $(\vec{u},\theta)$ in
$\Big(\tilde{L}^{2}_T(B^0_{\infty,1})\cap\tilde{L}^2_T(B^{0,1}_{\infty,\infty})\Big)\times
\Big(\tilde{L}^{2}_T(B^0_{{\frac{n}{2}},1})\cap
\tilde{L}^2_T(B^{0,1}_{{\frac{n}{2}},\infty})\Big)$ and
$ C_w\Big([0,T]; B^{-1}_{\infty,1}\cap B^{-1,1}_{\infty,\infty}\Big)
\times C\Big([0,T];B^{-1}_{\frac{n}{2},1}\cap B^{-1,1}_{\frac{n}{2},\infty}\Big)$
  satisfying
\begin{equation*}
\|\vec{u}\|_{\tilde{L}^{2}_T(B^0_{\infty,1})}+
\|\vec{u}\|_{\tilde{L}^{2}_T(B^{0,1}_{\infty,\infty})}\le
2\mu_1\quad \text{ and }\quad
\|\theta\|_{\tilde{L}^{2}_T(B^0_{{\frac{n}{2}},1})}+
\|\theta\|_{\tilde{L}^{2}_T(B^{0,1}_{{\frac{n}{2}},\infty})}\le
2\mu_2.\end{equation*}
\end{theorem}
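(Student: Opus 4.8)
The plan is to set up a contraction mapping for $\mathfrak{J}=(\mathfrak{J}_1,\mathfrak{J}_2)$ on the complete metric space

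\begin{align*}
X_T:=\Big\{(\vec u,\theta): \|\vec u\|_{\tilde L^2_T(B^0_{\infty,1})}+\|\vec u\|_{\tilde L^2_T(B^{0,1}_{\infty,\infty})}\le 2\mu_1,\ \|\theta\|_{\tilde L^2_T(B^0_{n/2,1})}+\|\theta\|_{\tilde L^2_T(B^{0,1}_{n/2,\infty})}\le 2\mu_2\Big\},
\end{align*}

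equipped with the natural metric. First I would record the linear heat estimates in Chemin--Lerner spaces: after a dyadic localization, $\|\triangle_q e^{t\Delta}f\|_{L^\rho(0,T;L^p)}\lesssim (1-e^{-c2^{2q}T})^{1/\rho}2^{-2q/\rho}\|\triangle_q f\|_p$, which upgrades $B^{-1}_{p,r}$ data into $\tilde L^2_T(B^0_{p,r})$ (and similarly for the logarithmically modified scale, where the weight $(3+q)^{\alpha}$ simply rides along), with the gain $(1-e^{-c2^{2q}T})^{1/2}\to 0$ as $q\to-\infty$ giving smallness of the low frequencies and smallness of $\mu_1,\mu_2$ controlling the rest — this is where the prescribed bounds $2\mu_i$ come from. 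For the Duhamel terms I would use the standard smoothing estimate $\big\|\int_0^t e^{(t-s)\Delta}g\,ds\big\|_{\tilde L^\rho_T(B^{s+2/\rho}_{p,r})}\lesssim \|g\|_{\tilde L^1_T(B^s_{p,r})}$, again with the $(3+q)^\alpha$ weight carried through verbatim.

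Next comes the core: bounding the bilinear/trilinear terms. For the Navier--Stokes nonlinearity $\mathbb P(\vec u\cdot\nabla)\vec u$ I would use Bony's paraproduct decomposition together with \eqref{eq4.1.8}--\eqref{eq4.1.10} to show $\|(\vec u\cdot\nabla)\vec u\|_{\tilde L^1_T(B^{-1}_{\infty,1})}\lesssim \|\vec u\|_{\tilde L^2_T(B^0_{\infty,1})}^2$ (the Hölder exponent $1=\frac12+\frac12$ in time is exactly why $\tilde L^2_T(B^0)$ is the right working space), and likewise with the logarithmic weight, using that $(3+q)^\alpha$ is submultiplicative-up-to-constants along the few frequency interactions allowed by the support conditions. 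The genuinely delicate term is the coupling $\vec u\cdot\nabla\theta$ in \eqref{eq4.1.6}: here I would again paraproduct-split, estimating $T_{\vec u}\nabla\theta$, $T_{\nabla\theta}\vec u$ and the remainder $R(\vec u,\nabla\theta)$ separately. The point is that $\vec u$ only lives at integrability exponent $\infty$ while $\theta$ lives at exponent $n/2$, so the product lands at exponent $n/2$; one needs $\|\vec u\cdot\nabla\theta\|_{\tilde L^1_T(B^{-1}_{n/2,1})}\lesssim \|\vec u\|_{\tilde L^2_T(B^0_{\infty,1})}\|\theta\|_{\tilde L^2_T(B^0_{n/2,1})}$, and the remainder term $R(\vec u,\nabla\theta)$ forces control of $\vec u$ in $B^0_{\infty,1}$ (summable, not merely $\ell^\infty$, in the dyadic index) — this is precisely why the theorem demands $\vec u_0\in B^{-1}_{\infty,1}$ and not just $B^{-1}_{\infty,\infty}$. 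The buoyancy term $\int_0^t e^{(t-s)\Delta}\mathbb P(\theta\vec a)\,ds$ is linear in $\theta$ and handled by the smoothing estimate alone, mapping $\tilde L^2_T(B^0_{n/2,1})$ (equivalently $\tilde L^1_T(B^{-1}_{n/2,1})$ after losing, say, half a derivative in time) into $\tilde L^2_T(B^0_{\infty,1})$ provided $n/2<\infty$ and one uses the Besov embedding $B^{0}_{n/2,1}\hookrightarrow$ suitable space; I would check this embedding step carefully since it is where the $\frac n2$ scaling is used.

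With these estimates in hand, choosing $\mu_1,\mu_2$ small (and using the $(1-e^{-c2^{2q}T})$ gain to absorb a fixed $T$) makes $\mathfrak J$ map $X_T$ into itself and contract — the bilinear and trilinear bounds give Lipschitz constants $O(\mu_1+\mu_2)<1$. Uniqueness follows from the same contraction on $X_T$. For the continuity-in-time statements, $\theta\in C([0,T];B^{-1}_{n/2,1}\cap B^{-1,1}_{n/2,\infty})$ follows from the fact that the heat flow and the Duhamel integral of an $\tilde L^1_T(B^{-1})$ source are strongly continuous when the dyadic index is summable ($r<\infty$); for $\vec u$, since $B^{-1}_{\infty,1}$ has summable index the same argument gives strong continuity there, but the component $B^{-1,1}_{\infty,\infty}$ has a non-summable ($r=\infty$) index, so only weak-$*$ continuity $C_w$ is asserted — I would obtain it from uniform boundedness plus density, testing against $\mathcal S$. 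The main obstacle throughout is the coupling term $\vec u\cdot\nabla\theta$: making the paraproduct remainder close at the mismatched integrability pair $(\infty,n/2)$ while keeping the time-integrability bookkeeping ($\tilde L^2\cdot\tilde L^2\subset\tilde L^1$) consistent is the crux, and it is what dictates the precise function-space framework in the statement.
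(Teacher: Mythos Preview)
Your overall strategy---fixed point in Chemin--Lerner spaces, paraproduct for the nonlinear terms, heat smoothing for the linear/Duhamel parts---is exactly the paper's approach (Lemmas~\ref{lem4.2.6}--\ref{lem4.2.7} for the products, Lemmas~\ref{lem4.3.2}--\ref{lem4.3.3} for the mapping estimates, and the abstract fixed-point Lemma~\ref{lem4.3.1} in place of a direct contraction). However, two of your intermediate claims are not quite right as written.

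First, the estimate $\|(\vec u\cdot\nabla)\vec u\|_{\tilde L^1_T(B^{-1}_{\infty,1})}\lesssim \|\vec u\|_{\tilde L^2_T(B^0_{\infty,1})}^2$ is false: $B^0_{\infty,1}$ is \emph{not} an algebra. In the paper's paraproduct analysis (Lemma~\ref{lem4.2.5}), the remainder term $\mathcal R(u,v)$ in $B^0_{p,1}$ produces a factor $\sum_{k\le q+3}1\sim q$, which is absorbed only by the logarithmic weight---i.e.\ one needs $\|\vec u\|_{B^{0,1}_{\infty,\infty}}$ on the right. Your ball $X_T$ already carries both norms, so the fix is immediate, but your diagnosis that ``the remainder forces $B^0_{\infty,1}$'' has it backwards: at $r=1$ it is the $B^{0,1}_{\infty,\infty}$ norm that the remainder needs (see \eqref{eq4.2.8}).

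Second, and more seriously, the plan to paraproduct $\vec u\cdot\nabla\theta$ directly does not close at the remainder. A short computation gives $\|R(\vec u,\nabla\theta)\|_{B^{-1}_{n/2,1}}\lesssim \sum_q 2^q\|\triangle_q\vec u\|_\infty\|\triangle_q\theta\|_{n/2}$, and the stray $2^q$ cannot be controlled by $\|\vec u\|_{B^0_{\infty,1}}\|\theta\|_{B^0_{n/2,1}}$. The paper avoids this by first using $\diva\vec u=0$ to write $\vec u\cdot\nabla\theta=\nabla\cdot(\vec u\theta)$, then estimating $\vec u\theta$ in $\tilde L^1_T(B^0_{n/2,1})$ via Lemma~\ref{lem4.2.6} (this is where the intersection norm on $\vec u$ enters), and finally trading the outer derivative for the $2^q$ in the heat smoothing. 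The same device is used for the Navier--Stokes nonlinearity.

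One small slip: in the nonhomogeneous setting $q\ge -1$, so there is no ``$q\to-\infty$'' gain; the low-frequency block $S_0$ is treated separately (see \eqref{eq4.3.7}, \eqref{eq4.3.14}) and contributes powers of $T$, while the constants in Lemmas~\ref{lem4.3.2}--\ref{lem4.3.3} grow like $(1+T)$. Smallness is obtained purely by choosing $\mu_1,\mu_2$ small for the given $T$.
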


\vskip.07cm
\begin{theorem}\label{thm4.1.3}
Let $n\ge2$, $1\le r\le\infty$ and $p\in(\frac{n}{2},\infty)$. Given
$T>0$, there exist $\mu_1$, $\mu_2>0$  such that for any
$(\vec{u}_0,\theta_0)\in (B^{-1}_{\infty,1}\cap
B^{-1,1}_{\infty,\infty})\times B^{-1}_{p,r}$ satisfying
\begin{equation*}
\left\{\begin{aligned}
\|\vec{u}_0\|_{B^{-1}_{\infty,1}}+\|\vec{u}_0\|_{{B^{-1,1}_{\infty,\infty}}}
&\le \mu_1\ \ \hbox{ and }\ \ \diva\vec{u}_0=0,
\\ \|\theta_0\|_{{B^{-1}_{p,r}}}&\le \mu_2,
\end{aligned}\right.
\end{equation*}
the Boussinesq system  has a unique solution $(\vec{u},\theta)$ in
$\Big(\tilde{L}^{2}_T(B^0_{\infty,1})\cap\tilde{L}^2_T(B^{0,1}_{\infty,\infty})\Big)\times
\tilde{L}^{2}_T(B^0_{p,r})$ and $ C_w([0,T]; B^{-1}_{\infty,1}\cap
B^{-1,1}_{\infty,\infty})\times C([0,T];B^{-1}_{p,r\ne\infty})$ or
$C_w([0,T]; B^{-1}_{\infty,1}\cap B^{-1,1}_{\infty,\infty})\times
C_w([0,T];B^{-1}_{p,\infty})$
 satisfying
\begin{align*}
\|\vec{u}\|_{\tilde{L}^{2}_T(B^0_{\infty,1})}+\|\vec{u}\|_{\tilde{L}^{2}_T(B^{0}_{\infty,\infty})}\le
2\mu_1\quad\hbox{ and } \quad
\|\theta\|_{\tilde{L}^{2}_T(B^{0,1}_{p,r})}\le 2\mu_2.
\end{align*}\end{theorem}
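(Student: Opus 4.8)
\ \ The plan is to realize $(\vec u,\theta)$ as the fixed point of $\mathfrak J=(\mathfrak J_1,\mathfrak J_2)$ and to apply the contraction mapping principle in a closed ball of a complete metric subspace of
\begin{align*}
\mathcal{X}_T :=\ & \Big(\tilde L^\infty_T(B^{-1}_{\infty,1}\cap B^{-1,1}_{\infty,\infty})\cap\tilde L^2_T(B^0_{\infty,1}\cap B^{0,1}_{\infty,\infty})\cap\tilde L^1_T(B^1_{\infty,1}\cap B^{1,1}_{\infty,\infty})\Big)\\
& \times\Big(\tilde L^\infty_T(B^{-1}_{p,r})\cap\tilde L^2_T(B^0_{p,r})\cap\tilde L^1_T(B^1_{p,r})\Big),
\end{align*}
namely the ball on which the $\tilde L^2_T(B^0_{\infty,1}\cap B^{0,1}_{\infty,\infty})$- and $\tilde L^2_T(B^0_{p,r})$-norms are $\le 2\mu_1$, resp. $\le 2\mu_2$ (the $\tilde L^\infty_T(B^{-1})$- and $\tilde L^1_T(B^1)$-components are auxiliary: for data of regularity $-1$ they are produced automatically by the heat flow, and only the $\tilde L^2_T(B^0)$-part is recorded in the statement). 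Writing $\mathcal Lf:=\int_0^te^{(t-s)\Delta}f(s)\,ds$, the two ingredients are: (a) heat smoothing in the Chemin--Lerner scale, $\|e^{t\Delta}f\|_{\tilde L^\rho_T(B^{s+2/\rho,\alpha}_{p,r})}\lesssim\|f\|_{B^{s,\alpha}_{p,r}}$ and $\|\mathcal Lf\|_{\tilde L^\rho_T(B^{s,\alpha}_{p,r})}\lesssim\|f\|_{\tilde L^{\rho_1}_T(B^{s-2-2/\rho+2/\rho_1,\alpha}_{p,r})}$ for $\rho\ge\rho_1$ --- uniform in $T$ up to a harmless power of $T$ on the block $\triangle_{-1}$, hence a constant for fixed $T$ --- which follow from $\|\triangle_q e^{t\Delta}f\|_p\lesssim e^{-c2^{2q}t}\|\triangle_q f\|_p$ and the heat-semigroup characterization (Lemma \ref{lem4.4.1}); and (b) paraproduct and remainder estimates in the spaces $B^{s,\alpha}_{p,r}$, derived from Bony's decomposition and the almost-orthogonality relations \eqref{eq4.1.8}--\eqref{eq4.1.10}. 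Since (a) carries no smallness from $T$ for data of negative regularity, the contraction is driven by the smallness of $\mu_1,\mu_2$.

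\textbf{The free evolution, the Navier--Stokes term and the buoyancy term.}\ \ The terms $e^{t\Delta}\vec u_0$ and $e^{t\Delta}\theta_0$ are placed in $\mathcal{X}_T$ directly by (a), with norms $\lesssim\mu_1$, resp. $\lesssim\mu_2$. For $\mathcal L\mathbb P\,\diva(\vec u\otimes\vec u)$ I would Bony-decompose $\vec u\otimes\vec u$; the two paraproducts are controlled by the $\ell^1$-summability built into $\|\vec u\|_{B^0_{\infty,1}}$, while the remainder $\sum_q\triangle_q\vec u\,\widetilde{\triangle}_q\vec u$ (with $\widetilde{\triangle}_q=\triangle_{q-1}+\triangle_q+\triangle_{q+1}$) sits at the borderline index and is handled by a Schur-type bound of the form
$$\Big\|\sum_{q\ge k-c}\|\triangle_q\vec u\|_{L^2_TL^\infty}^2\Big\|_{\ell^r_k}\ \lesssim\ \|\vec u\|_{\tilde L^2_T(B^0_{\infty,1})}\,\|\vec u\|_{\tilde L^2_T(B^{0,1}_{\infty,\infty})},$$
which uses $\sup_k\sum_{q\ge k}\|\triangle_q\vec u\|_{L^2_TL^\infty}\le\|\vec u\|_{\tilde L^2_T(B^0_{\infty,1})}$ on one side and $\sup_q(3+q)\|\triangle_q\vec u\|_{L^2_TL^\infty}\le\|\vec u\|_{\tilde L^2_T(B^{0,1}_{\infty,\infty})}$ on the other; this is precisely why the first component of the data must lie in $B^{-1}_{\infty,1}\cap B^{-1,1}_{\infty,\infty}$. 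The buoyancy term $\mathcal L\mathbb P(\theta\vec a)$ is merely linear in $\theta$; here I would use the three-derivative gain of $\mathcal L$ from $\tilde L^\infty_T$ to $\tilde L^2_T$ together with the embedding $B^{-1}_{p,r}\hookrightarrow B^{-3}_{\infty,1}$, which holds because $-3<-1-\frac{n}{p}$ exactly when $p>\frac{n}{2}$: this is where the hypothesis $p>\frac{n}{2}$ enters and where it is sharp --- at $p=\frac{n}{2}$ this $\varepsilon$ of room disappears and must be replaced by the logarithmically enhanced space for $\theta$, as in Theorem \ref{thm4.1.2}.

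\textbf{The coupling term --- the main obstacle.}\ \ The delicate term is $\mathcal L(\vec u\cdot\nabla\theta)=\mathcal L\,\diva(\vec u\theta)$. I would Bony-decompose $\vec u\cdot\nabla\theta$ directly, keeping the gradient on $\theta$ inside the low-frequency factor so that the low--high paraproduct $\sum_qS_{q-1}(\nabla\theta)\!\cdot\!\triangle_q\vec u$ converges geometrically and loses no logarithm. The high--low paraproduct is harmless; the low--high paraproduct and the remainder $\sum_q\triangle_q\vec u\!\cdot\!\widetilde{\triangle}_q\nabla\theta$ are dangerous, since there $\vec u$ occupies the output frequency while the regularities of the two factors sum to the critical value $0$. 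Both are brought into $\tilde L^1_T(B^{-1}_{p,r})$ --- after which $\mathcal L$ returns them to $\tilde L^2_T(B^0_{p,r})$ --- by exactly the Schur mechanism above: the kernel $K(k,q):=\|\triangle_q\vec u\|_{L^2_TL^\infty}\,\mathbf{1}_{\{q\ge k-c\}}$ satisfies $\sup_k\sum_q|K(k,q)|\lesssim\|\vec u\|_{\tilde L^2_T(B^0_{\infty,1})}$ and $\sup_q\sum_k|K(k,q)|\lesssim\|\vec u\|_{\tilde L^2_T(B^{0,1}_{\infty,\infty})}$, hence it maps $\ell^r\to\ell^r$ for every $r\in[1,\infty]$ with norm $\lesssim\|\vec u\|_{\tilde L^2_T(B^0_{\infty,1})}+\|\vec u\|_{\tilde L^2_T(B^{0,1}_{\infty,\infty})}$; applying it to the $\ell^r$-sequence $\big(\|\widetilde{\triangle}_q\theta\|_{L^2_TL^p}\big)_q$ produces the required bound $\lesssim(\mu_1+\mu_2)\mu_2$. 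This step --- the interactions in $\vec u\cdot\nabla\theta$ in which $\vec u$ sits at the output frequency --- is the hard part, and is the reason the two logarithmically compatible norms of $\vec u$ must be propagated simultaneously (and why the argument covers all $r\in[1,\infty]$: for $r=\infty$ only the $B^0_{\infty,1}$-norm is used, for $r=1$ only the $B^{0,1}_{\infty,\infty}$-norm, for $1<r<\infty$ both).

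\textbf{Conclusion.}\ \ The same bilinear and trilinear estimates applied to $\mathfrak J(\vec u,\theta)-\mathfrak J(\vec u',\theta')$ give a Lipschitz constant $O(\mu_1+\mu_2)$ on the ball --- every nonlinear contribution being at least bilinear --- so $\mathfrak J$ is a contraction once $\mu_1,\mu_2$ are small; the Banach fixed point theorem yields the unique solution in the ball, and uniqueness in the whole of $\mathcal{X}_T$ follows from the same estimates. Finally the time regularity is read off from \eqref{eq4.1.5}--\eqref{eq4.1.6}: the $\tilde L^2_T(B^0)$-bounds and the smoothing of $e^{t\Delta}$ promote to $\vec u\in C_w([0,T];B^{-1}_{\infty,1}\cap B^{-1,1}_{\infty,\infty})$ and $\theta\in C([0,T];B^{-1}_{p,r})$ when $r<\infty$, only weak-$\ast$ continuity being available at the non-separable endpoint $r=\infty$ and on the $B^{\cdot,1}_{\infty,\infty}$ scale for $\vec u$. (Theorem \ref{thm4.1.2} follows in the same way with $\theta$ propagated in $B^{-1}_{\frac{n}{2},1}\cap B^{-1,1}_{\frac{n}{2},\infty}$, which supplies at $p=\frac{n}{2}$ the logarithmic room lost in the buoyancy estimate.)
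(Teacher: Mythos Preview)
Your plan is essentially the paper's: a fixed point in the Chemin--Lerner space $\tilde L^2_T(B^0_{\infty,1}\cap B^{0,1}_{\infty,\infty})\times\tilde L^2_T(B^0_{p,r})$, with the bilinear term $\vec u\,\theta$ controlled by a paraproduct estimate in which the logarithmic enhancement on $\vec u$ absorbs the borderline remainder. Your Schur--test packaging of that estimate---$\sup_k\sum_q K(k,q)\lesssim\|\vec u\|_{\tilde L^2_T(B^0_{\infty,1})}$ and $\sup_q\sum_k K(k,q)\lesssim\|\vec u\|_{\tilde L^2_T(B^{0,1}_{\infty,\infty})}$, hence $K:\ell^r\to\ell^r$ for all $r$---is exactly the content of the paper's Lemma~\ref{lem4.2.6} (the Chemin--Lerner version of Lemma~\ref{lem4.2.4}), which is proved there by treating the endpoints $r=1$ and $r=\infty$ separately in \eqref{eq4.2.6}--\eqref{eq4.2.9} and interpolating. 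So on the hard coupling term you and the paper agree.

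There is one concrete slip, in the buoyancy term. You invoke a ``three-derivative gain of $\mathcal L$ from $\tilde L^\infty_T$ to $\tilde L^2_T$'', but your own smoothing formula in (a) carries the hypothesis $\rho\ge\rho_1$, which excludes $(\rho,\rho_1)=(2,\infty)$; the Young-in-time argument gives at most the gain $2+2/\rho-2/\rho_1$ with $\rho\ge\rho_1$, i.e.\ at most two derivatives. From $\theta\in\tilde L^\infty_T(B^{-3}_{\infty,1})$ you would therefore land only in $\tilde L^2_T(B^{-1}_{\infty,1})$, one derivative short. The repair is immediate and is precisely what the paper does in Lemma~\ref{lem4.3.2}: use $\theta\in\tilde L^2_T(B^0_{p,r})$ (already in your solution space), apply Bernstein block-wise to get $\|\triangle_q\theta\|_\infty\lesssim 2^{qn/p}\|\triangle_q\theta\|_p$, and then let $\mathcal L:\tilde L^2_T\to\tilde L^2_T$ gain its full two derivatives, placing $\mathcal L\mathbb P(\theta\vec a)$ in $\tilde L^2_T(B^{2-n/p}_{\infty,r})\hookrightarrow\tilde L^2_T(B^0_{\infty,1}\cap B^{0,1}_{\infty,\infty})$ exactly when $p>\frac n2$---which is the sharp threshold you identified. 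With this correction your argument closes; note that it also makes the auxiliary $\tilde L^\infty_T(B^{-1})$ and $\tilde L^1_T(B^1)$ components of your solution space unnecessary for running the contraction, which is why the paper works in $\tilde L^2_T$ alone.
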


\vskip.07cm
\begin{theorem}\label{thm4.1.4} Let $n\ge2$,
$\varepsilon>0$ and $p\in(\frac{n}{2},\infty)$. Given $0<T\le 1$,
there exist $\mu_1=\mu_1(\varepsilon)$, $\mu_2=\mu_2(\varepsilon)>0$
such that for any $(\vec{u}_0,\theta_0)\in
B^{-1,1}_{\infty,\infty}\times B^{-1,\varepsilon}_{p,\infty}$
satisfying
\begin{equation*}
\left\{\begin{aligned} \|\vec{u}_0\|_{{B^{-1,1}_{\infty,\infty}}}
&\le \mu_1\ \ \hbox{ and }\ \ \diva\vec{u}_0=0,
\\ \|\theta_0\|_{{B^{-1,\varepsilon}_{p,\infty}}}&\le \mu_2,
\end{aligned}\right.
\end{equation*}
the Boussinesq system  has a unique solution $(\vec{u},\theta)$ in $ C_w([0,T]; B^{-1,1}_{\infty,\infty})
\times C_w([0,T];B^{-1,\varepsilon}_{p,\infty})$
 satisfying
\begin{equation*}
\sup_{0<t<T}t^{\frac{1}{2}}|\ln(\frac{t}{e^2})|\|\vec{u}\|_\infty\le
2\mu_1 \quad \hbox{ and } \quad
\sup_{0<t<T}t^{\frac{1}{2}}|\ln(\frac{t}{e^2})|^\varepsilon\|\theta\|_{p}\le
2\mu_2.
\end{equation*}
\end{theorem}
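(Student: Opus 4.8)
The plan is to solve the integral system \eqref{eq4.1.5}--\eqref{eq4.1.6} by a contraction-mapping argument in weighted-in-time spaces tailored to the conclusion. Introduce
\begin{align*}
\|\vec u\|_{X_1}:=\sup_{0<t<T}t^{\frac12}\Big|\ln\tfrac{t}{e^2}\Big|\,\|\vec u(t)\|_\infty,\qquad
\|\theta\|_{X_2}:=\sup_{0<t<T}t^{\frac12}\Big|\ln\tfrac{t}{e^2}\Big|^{\varepsilon}\,\|\theta(t)\|_{p},
\end{align*}
write $B_1(\vec u,\vec v):=\int_0^te^{(t-s)\Delta}\mathbb{P}\,\diva(\vec u\otimes\vec v)\,ds$ (so that, by $\diva\vec u=0$, the nonlinear term in \eqref{eq4.1.5} is $B_1(\vec u,\vec u)$) and $B_2(\vec u,\theta):=\int_0^te^{(t-s)\Delta}\diva(\theta\vec u)\,ds$, and regard $\mathfrak{J}$ as acting on $X:=X_1\times X_2$. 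The first step is the linear estimate, which pins down the weights: by the heat-semigroup characterisation of the logarithmically modified Besov spaces (Lemma~\ref{lem4.4.1}) one has $\|e^{t\Delta}\vec u_0\|_{X_1}\lesssim\|\vec u_0\|_{B^{-1,1}_{\infty,\infty}}$ and $\|e^{t\Delta}\theta_0\|_{X_2}\lesssim\|\theta_0\|_{B^{-1,\varepsilon}_{p,\infty}}$, the exponents $1$ and $\varepsilon$ of the logarithm being exactly those carried by the norms of the data spaces.

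The core of the proof is a triple of nonlinear estimates, each obtained from the smoothing of the heat semigroup together with the $L^q$-boundedness ($1<q<\infty$) of $\mathbb{P}$. (a) \emph{Navier--Stokes term.} Using $\|e^{\tau\Delta}\mathbb{P}\diva F\|_\infty\lesssim\tau^{-1/2}\|F\|_\infty$, the term $B_1$ is controlled by the scalar integral $\int_0^t(t-s)^{-\frac12}s^{-1}\big|\ln\tfrac{s}{e^2}\big|^{-2}ds$, which, split at $s=t/2$, is $\lesssim t^{-\frac12}\big|\ln\tfrac{t}{e^2}\big|^{-1}$; hence $\|B_1(\vec u,\vec v)\|_{X_1}\lesssim\|\vec u\|_{X_1}\|\vec v\|_{X_1}$. (b) \emph{Buoyancy term.} Here $\|e^{(t-s)\Delta}\mathbb{P}(\theta\vec a)\|_\infty\lesssim(t-s)^{-\frac{n}{2p}}\|\theta(s)\|_p$, so one meets $\int_0^t(t-s)^{-\frac{n}{2p}}s^{-\frac12}\big|\ln\tfrac{s}{e^2}\big|^{-\varepsilon}ds\lesssim t^{\frac12-\frac{n}{2p}}\big|\ln\tfrac{t}{e^2}\big|^{-\varepsilon}$, finite precisely because $p>n/2$; multiplying by the $X_1$-weight and using $0<t\le T\le1$ (so that $t^{1-\frac n{2p}}\big|\ln\tfrac t{e^2}\big|^{1-\varepsilon}$ stays bounded on $(0,1]$) gives $\big\|\int_0^te^{(t-s)\Delta}\mathbb{P}(\theta\vec a)\,ds\big\|_{X_1}\lesssim\|\theta\|_{X_2}$. (c) \emph{Transport term.} Using $\vec u\cdot\nabla\theta=\diva(\theta\vec u)$ and $\|e^{\tau\Delta}\diva(\theta\vec u)\|_p\lesssim\tau^{-1/2}\|\vec u\|_\infty\|\theta\|_p$, one is reduced to $\int_0^t(t-s)^{-\frac12}s^{-1}\big|\ln\tfrac{s}{e^2}\big|^{-1-\varepsilon}ds\lesssim\varepsilon^{-1}t^{-\frac12}\big|\ln\tfrac{t}{e^2}\big|^{-\varepsilon}$, convergent since $1+\varepsilon>1$; thus $\|B_2(\vec u,\theta)\|_{X_2}\lesssim\varepsilon^{-1}\|\vec u\|_{X_1}\|\theta\|_{X_2}$. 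The factor $\varepsilon^{-1}$ is the source of the $\varepsilon$-dependence of $\mu_1,\mu_2$; since $0<T\le1$, all other constants are $T$-free.

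With these in hand the fixed point is standard. For $\|\vec u\|_{X_1}\le2\mu_1$ with $\mu_1$ small, $\theta\mapsto e^{t\Delta}\theta_0-B_2(\vec u,\theta)$ is a contraction on $X_2$ by (c), so \eqref{eq4.1.6} is solved by $\theta=\Theta(\vec u)$ with $\|\Theta(\vec u)\|_{X_2}\le2\mu_2$ and $\Theta$ Lipschitz in $\vec u$ with Lipschitz constant $O(\mu_2)$. Substituting into \eqref{eq4.1.5} yields the self-map $\Phi(\vec u):=e^{t\Delta}\vec u_0-B_1(\vec u,\vec u)+\int_0^te^{(t-s)\Delta}\mathbb{P}(\Theta(\vec u)\vec a)\,ds$; by (a), (b) and the linear estimate, $\Phi$ maps $\{\|\vec u\|_{X_1}\le2\mu_1\}$ into itself and is a contraction there as soon as $\mu_1,\mu_2$ are small enough (depending only on $n,p,\varepsilon$ and $\vec a$) with $\mu_2\lesssim\mu_1$. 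The Banach fixed point theorem then gives a unique $\vec u$, and $(\vec u,\Theta(\vec u))$ is the desired solution, which automatically satisfies $\|\vec u\|_{X_1}\le2\mu_1$ and $\|\theta\|_{X_2}\le2\mu_2$; uniqueness within $C_w([0,T];B^{-1,1}_{\infty,\infty})\times C_w([0,T];B^{-1,\varepsilon}_{p,\infty})$ under these bounds follows since any such solution already lies in the ball. The weak-$\ast$ time continuity is then read off from the Duhamel formulas \eqref{eq4.1.5}--\eqref{eq4.1.6}: each term converges weak-$\ast$ in $t$ by the heat-semigroup characterisation of these spaces, dominated convergence being licensed by the integrable time singularities found in (a)--(c).

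\textbf{Main obstacle.} The delicate point is estimate (a) (and, similarly, (c)): the bilinear time integral $\int_0^t(t-s)^{-1/2}s^{-1}\big|\ln\tfrac{s}{e^2}\big|^{-2}ds$ is borderline and closes only because the logarithmic weight upgrades the \emph{non}-integrable singularity $s^{-1}$ to the integrable $s^{-1}|\ln s|^{-2}$, using up exactly one power of the logarithm and returning precisely $\big|\ln\tfrac{t}{e^2}\big|^{-1}$, the weight of $X_1$. There is no slack to spare --- this is the analytic shadow of the ill-posedness of Navier--Stokes in the unmodified space $B^{-1}_{\infty,\infty}$ --- so the whole argument hinges on carrying the bookkeeping of logarithmic powers (together with the $\varepsilon$- and $T\le1$-dependence entering through the buoyancy coupling) through exactly.
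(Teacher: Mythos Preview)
Your proposal is correct and follows essentially the same route as the paper: the weighted-in-time norms $X_1,X_2$ are precisely those used there, the linear estimates are the heat-semigroup characterisation (Lemma~\ref{lem4.4.1}), and your three nonlinear estimates (a)--(c) reproduce the content of the paper's Lemma~\ref{lem4.4.2} (with (a) outsourced in the paper to \cite{Cui11}). The only cosmetic difference is the fixed-point packaging: the paper applies the abstract coupled-system lemma (Lemma~\ref{lem4.3.1}) directly on the product space $X_1\times X_2$, whereas you first solve the $\theta$-equation to get $\Theta(\vec u)$ and then close a single contraction for $\vec u$; both are standard and equivalent here. One small remark: your phrase ``$L^q$-boundedness of $\mathbb{P}$'' does not by itself justify the $L^\infty$ bound in (a) (or the $L^p\!\to\!L^\infty$ bound in (b)); what you actually need---and correctly state---are the Oseen-kernel decay estimates for $e^{\tau\Delta}\mathbb{P}\nabla$ and $e^{\tau\Delta}\mathbb{P}$ (cf.\ \cite[Prop.~11.1]{Le02}), exactly as the paper invokes.
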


\medskip
Later on, we shall use $C$ and $c$ to denote positive constants
which depend on dimension $n$,  $|\vec{a}|$ and might depend on $p$
and may change from line to line.
 $\mathcal{F}f$ and $\hat{f}$  stand for Fourier
transform of $f$ with respect to space variable and
$\mathcal{F}^{-1}$ stands for the inverse Fourier transform.  We
denote $A\le{CB}$ by $A\lesssim B$ and $A\lesssim{B}\lesssim{A}$
  by $A\sim{B}$. For any $1\le{\rho, q}\le\infty$, we denote
 $L^q(\rn)$, $L^\rho(0,T)$ and $L^\rho(0,T;L^q(\rn))$ by  $L^q_x$, $L^\rho_T$
 and $L^\rho_TL^q_x$, respectively. We denote $\|f\|_{L^p_x}$ by $\|f\|_p$ for short. In what follows we will not distinguish  vector valued function space
 and scalar function space if there is no confusion.

We use two different methods which are used by Chermin, etc. and
 Kato, etc. respectively to prove Theorems
\ref{thm4.1.2}$\sim$\ref{thm4.1.3} and Theorem \ref{thm4.1.4}.
Therefore we write their proofs in separate sections.
 In Sect. \!2 we introduce the
paradifferential calculus results, while
 in Sect. \!3 we prove Theorems \ref{thm4.1.2}$\sim$\ref{thm4.1.3}. Finally, in Sect. 4 we prove Theorem \ref{thm4.1.4}.
 \section{Paradifferential calculus}
In this section, we prove several preliminary results concerning
the paradifferential calculus. We first recall some fundamental
results.

\vskip.07cm
\begin{lemma}\label{lem4.2.1}\textsc{(Bernstein)}\ Let
$k$ be in $\mathbb{N}\cup\{0\}$ and $0 < R_1 <R_2$.
There exists a constant $C$ depending only on $R_1, R_2$ and
dimension $n$, such that for all $1 \le a \le b \le \infty$ and
$u\in L^a_x$, we have
\begin{align}\label{eq4.2.1}
\supp \hat{u} \subset B(0,R_1\lambda),\quad
\sup_{|\alpha|=k}\|\partial^\alpha{u}\|_b&\le C^{k+1}\lambda^{k+n(1/a-1/b)} \|u\|_a,\\
\supp \hat{u} \subset C(0,R_1\lambda, R_2\lambda),\quad
\sup_{|\alpha|=k}\|\partial^\alpha{u}\|_a&\sim C^{k+1}\lambda^{k}
\|u\|_a. \label{eq4.2.2}\end{align}
\end{lemma}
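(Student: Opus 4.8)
The plan is to reduce both estimates to the case $\lambda=1$ by a scaling argument and then to represent $u$ (respectively $\partial^\alpha u$) as a convolution against a fixed Schwartz kernel, after which Young's inequality does the work; the only subtle point is that the implied constants must grow no faster than $C^{k+1}$ in the differentiation order $k$. First I would treat \eqref{eq4.2.1}. Fix once and for all $\psi\in C_c^\infty(\mathbb{R}^n)$ with $\psi\equiv1$ on $B(0,R_1)$ and $\supp\psi\subset B(0,2R_1)$, and set $\psi_\lambda(\xi):=\psi(\xi/\lambda)$. Since $\supp\hat u\subset B(0,R_1\lambda)$ we have $\hat u=\psi_\lambda\hat u$, i.e. $u=g_\lambda\ast u$ with $g:=\mathcal F^{-1}\psi$ and $g_\lambda(x):=\lambda^n g(\lambda x)$; hence $\partial^\alpha u=(\partial^\alpha g_\lambda)\ast u$ with $\partial^\alpha g_\lambda(x)=\lambda^{n+|\alpha|}(\partial^\alpha g)(\lambda x)$. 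Defining $c\in[1,\infty]$ by $1/c:=1-1/a+1/b$ (which lies in $[0,1]$ exactly because $a\le b$), Young's inequality gives $\|\partial^\alpha u\|_b\le\|\partial^\alpha g_\lambda\|_c\|u\|_a$, and a change of variables yields $\|\partial^\alpha g_\lambda\|_c=\lambda^{|\alpha|+n(1/a-1/b)}\|\partial^\alpha g\|_c$. It then remains to check $\|\partial^\alpha g\|_c\le C^{k+1}$ uniformly in $c$ when $|\alpha|=k$: since $\widehat{\partial^\alpha g}=(i\xi)^\alpha\psi(\xi)$ is supported in the \emph{fixed} ball $B(0,2R_1)$, one has $\|\partial^\alpha g\|_\infty\lesssim\|(i\xi)^\alpha\psi\|_{L^1}\le C^{k+1}$, while integrating by parts (multiplying by $(1+|x|^2)^M$ with $M>n/2$) bounds $\|\partial^\alpha g\|_{L^1}$ by $\sum_{|\beta|\le 2M}\|\partial^\beta(\xi^\alpha\psi)\|_{L^1}\le C^{k+1}$, the Leibniz expansion producing only combinatorial factors $\le C^k$ and an $R^{|\alpha|}$-type factor; finally $\|\partial^\alpha g\|_c\le\|\partial^\alpha g\|_{L^1}+\|\partial^\alpha g\|_\infty$. (One may first regularize so that $u\in\mathcal S$, legitimizing all the manipulations, then pass to general $u\in L^a_x$ by density.)

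For \eqref{eq4.2.2}, the bound $\sup_{|\alpha|=k}\|\partial^\alpha u\|_a\lesssim C^{k+1}\lambda^k\|u\|_a$ is just \eqref{eq4.2.1} with $b=a$, since $C(0,R_1\lambda,R_2\lambda)\subset B(0,R_2\lambda)$. For the converse, fix $\tilde\varphi\in C_c^\infty(\mathbb{R}^n)$ with $\tilde\varphi\equiv1$ on $C(0,R_1,R_2)$ and $\supp\tilde\varphi$ contained in an annulus bounded away from the origin, so $\hat u=\tilde\varphi(\cdot/\lambda)\hat u$. Using the multinomial identity $|\xi|^{2k}=\sum_{|\alpha|=k}\frac{k!}{\alpha!}\,\overline{(i\xi)^\alpha}(i\xi)^\alpha$ one writes
\begin{align*}
\hat u(\xi)&=\sum_{|\alpha|=k}\frac{k!}{\alpha!}\,\frac{\overline{(i\xi)^\alpha}\,\tilde\varphi(\xi/\lambda)}{|\xi|^{2k}}\,\widehat{\partial^\alpha u}(\xi)\\
&=\sum_{|\alpha|=k}\frac{k!}{\alpha!}\,(-i)^k\lambda^{-k}\,G_\alpha(\xi/\lambda)\,\widehat{\partial^\alpha u}(\xi),
\end{align*}
with $G_\alpha(\eta):=\eta^\alpha\tilde\varphi(\eta)/|\eta|^{2k}$ smooth and supported in the fixed annulus. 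Taking $\mathcal F^{-1}$, applying Young's inequality and using $\sum_{|\alpha|=k}k!/\alpha!=n^k$,
\begin{align*}
\|u\|_a\le n^k\lambda^{-k}\Big(\max_{|\alpha|=k}\|\mathcal F^{-1}G_\alpha\|_{L^1}\Big)\sup_{|\alpha|=k}\|\partial^\alpha u\|_a,
\end{align*}
and since $G_\alpha$ is supported away from $\xi=0$, the same weighted-sup-norm estimate as above gives $\|\mathcal F^{-1}G_\alpha\|_{L^1}\le C^{k+1}$; absorbing $n^k$ into $C^k$ yields the matching lower bound in \eqref{eq4.2.2}.

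The step I expect to be genuinely delicate is the constant bookkeeping, namely verifying that $\|\partial^\alpha g\|_{L^1}$ and $\|\mathcal F^{-1}G_\alpha\|_{L^1}$ grow only like $C^{k+1}$. This works precisely because every symbol in play is supported in a fixed compact set (bounded away from the origin in the annular case), so the volume of the support contributes only a constant and the Leibniz rule produces at worst $C^k$ combinatorial factors times an $R^{|\alpha|}$ factor; everything else is the routine scaling-plus-Young argument.
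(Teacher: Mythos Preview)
The paper does not prove this lemma; it is stated without proof as the classical Bernstein inequality (note the attribution in the heading and that the text moves directly to Lemma~\ref{lem4.2.2} with no intervening proof environment). Your argument is the standard textbook proof and is correct: the scaling reduction, the convolution-plus-Young route for \eqref{eq4.2.1}, and the multinomial identity for the annular lower bound in \eqref{eq4.2.2} are exactly how this is done in, e.g., Lemari\'e-Rieusset or Bahouri--Chemin--Danchin. One cosmetic remark: the step $\|\partial^\alpha g\|_c\le\|\partial^\alpha g\|_{L^1}+\|\partial^\alpha g\|_{L^\infty}$ is more cleanly justified by log-convexity, $\|\cdot\|_c\le\|\cdot\|_1^{1/c}\|\cdot\|_\infty^{1-1/c}$, which gives the $C^{k+1}$ bound directly once both endpoints are controlled.
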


\vskip.07cm
\begin{lemma}\label{lem4.2.2}\textsc{\cite{Danchin05, Triebel78}}
 \ Let $1\le p<\infty$. Then we have the following assertions:

\textsc{(}1\textsc{)}\ \  $B^{0}_{\infty,1}\hookrightarrow
\mathcal{C}\cap{L}^{\infty}_x\hookrightarrow
L^{\infty}_x\hookrightarrow B^{0}_{\infty,\infty}$.

\textsc{(}2\textsc{)}\ \  $B^{0}_{p,1}\hookrightarrow
L^p_x\hookrightarrow B^0_{p,\infty}$.
\end{lemma}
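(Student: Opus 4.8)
The plan is to establish all four inclusions directly from the nonhomogeneous Littlewood--Paley decomposition $u=\sum_{q\ge-1}\triangle_qu$ in $\mathcal{S}'(\rn)$, using only Young's convolution inequality and Bernstein's inequality (Lemma \ref{lem4.2.1}).

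For the left-hand inclusions $B^0_{\infty,1}\hookrightarrow\mathcal{C}\cap L^\infty_x$ and $B^0_{p,1}\hookrightarrow L^p_x$, I would argue as follows. Given $u\in B^0_{p,1}$ with $1\le p\le\infty$, the partial sums $S_Nu=\sum_{q=-1}^{N-1}\triangle_qu$ satisfy $\sum_{q\ge-1}\|\triangle_qu\|_p=\|u\|_{B^0_{p,1}}<\infty$, so $(S_Nu)_N$ is Cauchy in the Banach space $L^p_x$; its $L^p$-limit must coincide with the distributional limit $u$, which gives $\|u\|_p\le\|u\|_{B^0_{p,1}}$. When $p=\infty$ one observes in addition that each block $\triangle_qu$ is a convolution of $u$ with a Schwartz function and, by Bernstein, has bounded gradient, hence is bounded and uniformly continuous; since $S_Nu\to u$ uniformly, the limit $u$ is bounded and uniformly continuous, i.e. $u\in\mathcal{C}\cap L^\infty_x$. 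The remaining inclusion $\mathcal{C}\cap L^\infty_x\hookrightarrow L^\infty_x$ is trivial.

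For the right-hand inclusions $L^\infty_x\hookrightarrow B^0_{\infty,\infty}$ and $L^p_x\hookrightarrow B^0_{p,\infty}$, I would use that $\triangle_qu=2^{qn}h(2^q\cdot)\ast u$ for $q\ge0$ with $h=\mathcal{F}^{-1}\varphi\in\mathcal{S}(\rn)$, and $\triangle_{-1}u=\tilde{h}\ast u$ with $\tilde{h}=\mathcal{F}^{-1}\chi\in\mathcal{S}(\rn)$. Young's inequality then gives, for every $1\le p\le\infty$,
\begin{equation*}
\|\triangle_qu\|_p\le\|2^{qn}h(2^q\cdot)\|_{L^1_x}\|u\|_p=\|h\|_{L^1_x}\|u\|_p\ \ (q\ge0),\qquad\|\triangle_{-1}u\|_p\le\|\tilde{h}\|_{L^1_x}\|u\|_p,
\end{equation*}
so that $\|u\|_{B^0_{p,\infty}}=\sup_{q\ge-1}\|\triangle_qu\|_p\lesssim\|u\|_p$.

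The statement is classical (see \cite{Danchin05,Triebel78}), so I do not expect any real obstacle; the only step that deserves a line of care is the passage from convergence in $\mathcal{S}'(\rn)$ to convergence in $L^p_x$ in the first part, which is handled by the absolute convergence of $\sum_q\|\triangle_qu\|_p$ in the Banach space $L^p_x$ (and, for the continuity claim, by the fact that a uniform limit of bounded uniformly continuous functions is bounded and uniformly continuous).
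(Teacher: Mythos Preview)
Your argument is correct and is the standard one. Note, however, that the paper does not give its own proof of this lemma: it is stated with a citation to \cite{Danchin05,Triebel78} and used as a known fact, so there is nothing in the paper to compare your proposal against. What you wrote is essentially the proof one finds in those references.
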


\vskip.07cm
\begin{lemma}\label{lem4.2.3}
\textsc{(1)}\  Let $1\le p\le\infty$, $0\le\beta\le\alpha<\infty$,
$-\infty<s<\infty$ and $1\le r_1<r_2\le\infty$. Then
\begin{align*}
  B^{s,\alpha}_{p,r_1}\hookrightarrow B^{s,\alpha}_{p,r_2}\hookrightarrow
  B^{s,\beta}_{p,r_2}.
\end{align*}
\textsc{(2)}\  Let $1<\tilde r\le\infty$, $1\le p\le \infty$ and
$-\infty<s<\infty$. For any $\epsilon>0$, we have
\begin{align}\label{eq4.2.3}
B^{s+\epsilon}_{p,\infty}\hookrightarrow
B^{s,1}_{p,\infty}\hookrightarrow{B^{s}_{p,\tilde r}},\quad
B^{s+\epsilon}_{p,\infty}\hookrightarrow{B^{s}_{p,1}}\hookrightarrow{B^{s}_{p,\tilde
r}}.
\end{align}\\
\textsc{(3)}\  There is no  inclusion relation between the spaces
$B^{0}_{\infty,1}$ and $B^{0,1}_{\infty,\infty}$.
\end{lemma}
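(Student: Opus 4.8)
\textbf{Part (1).} The embedding $B^{s,\alpha}_{p,r_1}\hookrightarrow B^{s,\alpha}_{p,r_2}$ for $r_1<r_2$ follows from the elementary inclusion $\ell^{r_1}\hookrightarrow\ell^{r_2}$ applied to the sequence $\big(2^{qs}(3+q)^{\alpha}\|\triangle_q f\|_p\big)_{q\ge-1}$; no modification of the classical argument is needed since the logarithmic weight is a fixed positive sequence. For the second embedding $B^{s,\alpha}_{p,r_2}\hookrightarrow B^{s,\beta}_{p,r_2}$ with $\beta\le\alpha$, I would simply note that $(3+q)^{\beta}\le(3+q)^{\alpha}$ for every $q\ge-1$ (since $3+q\ge2>1$), so the defining norm only decreases. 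Both steps are one-line monotonicity observations.

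\textbf{Part (2).} All four embeddings reduce to the scalar inequality relating powers of $2^q$ against powers of $(3+q)$. For $B^{s+\epsilon}_{p,\infty}\hookrightarrow B^{s,1}_{p,\infty}$, write $2^{qs}(3+q)\|\triangle_q f\|_p = (3+q)2^{-q\epsilon}\cdot 2^{q(s+\epsilon)}\|\triangle_q f\|_p$ and observe $\sup_{q\ge-1}(3+q)2^{-q\epsilon}<\infty$. For $B^{s,1}_{p,\infty}\hookrightarrow B^s_{p,\tilde r}$ with $\tilde r>1$, I would estimate $\big(\sum_q 2^{q\tilde r s}\|\triangle_q f\|_p^{\tilde r}\big)^{1/\tilde r}\le \|f\|_{B^{s,1}_{p,\infty}}\big(\sum_q(3+q)^{-\tilde r}\big)^{1/\tilde r}$, and the series $\sum_{q\ge-1}(3+q)^{-\tilde r}$ converges precisely because $\tilde r>1$; this is the one place the hypothesis $\tilde r>1$ is essential, and it is the only mildly delicate point. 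The remaining two embeddings are analogous: $B^{s+\epsilon}_{p,\infty}\hookrightarrow B^s_{p,1}$ uses $\sum_q 2^{-q\epsilon}<\infty$, and $B^s_{p,1}\hookrightarrow B^s_{p,\tilde r}$ is again just $\ell^1\hookrightarrow\ell^{\tilde r}$.

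\textbf{Part (3).} This is the genuinely nontrivial assertion and the main obstacle. To show $B^0_{\infty,1}\not\hookrightarrow B^{0,1}_{\infty,\infty}$ and conversely, I would construct two explicit lacunary test distributions via their Littlewood--Paley pieces. For the first non-inclusion, choose $f$ with $\|\triangle_q f\|_\infty = a_q$ where $(a_q)$ is summable but $(3+q)a_q$ is unbounded — e.g. $a_q = q^{-2}$ for $q$ in a sparse set and $0$ otherwise, rescaled so that $\sum a_q<\infty$ while $\sup_q (3+q)a_q=\infty$ is arranged by instead taking $a_q=(3+q)^{-1}(\log(3+q))^{-2}$ along all $q$, which is summable (by comparison with $\int dx/(x\log^2 x)$) yet $(3+q)a_q=(\log(3+q))^{-2}\to0$ — so I must instead use a sparse sequence: put mass $a_{q_k}=2^{-k}$ concentrated at a rapidly growing subsequence $q_k$, giving $\sum_k 2^{-k}<\infty$ in $B^0_{\infty,1}$ but $(3+q_k)2^{-k}\to\infty$ if $q_k$ grows fast enough, so $f\notin B^{0,1}_{\infty,\infty}$. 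Conversely, for $B^{0,1}_{\infty,\infty}\not\hookrightarrow B^0_{\infty,1}$, take $g$ with $\|\triangle_q g\|_\infty=(3+q)^{-1}$ for all $q$: then $\sup_q(3+q)\|\triangle_q g\|_\infty=1<\infty$ so $g\in B^{0,1}_{\infty,\infty}$, whereas $\sum_q\|\triangle_q g\|_\infty=\sum_q(3+q)^{-1}=\infty$, so $g\notin B^0_{\infty,1}$. In each case one must exhibit an actual tempered distribution realizing the prescribed dyadic profile; the standard device is $f=\sum_k \varepsilon_k\, \mathcal{F}^{-1}(\varphi(2^{-q_k}\cdot))$ composed with suitable modulations to make $\|\triangle_{q_k}f\|_\infty$ comparable to the target, using Bernstein (Lemma 2.1) and the support property \eqref{eq4.1.8} to guarantee the pieces do not interfere. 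The careful bookkeeping that $\triangle_q$ of the constructed series reproduces the intended sequence, and that the object lies in $\mathcal S'$, is where the real work sits; everything else is routine.
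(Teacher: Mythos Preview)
Your proposal is correct. Parts (1) and (2) match the paper's treatment exactly in spirit: the paper writes only ``It suffices to prove (3)'' and leaves them as the routine monotonicity and $\ell^r$-inclusion facts you spelled out.

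For Part (3) your argument is valid, but the paper takes a somewhat different and cleaner route in two respects. First, the paper realizes the prescribed dyadic profiles via Dirac masses: setting $f=\sum_{j\ge-1} a_j\,\delta_{2^j}$ (following Yoneda), one has $\|f\|_{B^0_{\infty,1}}\simeq\sum_j|a_j|$ and $\|f\|_{B^{0,1}_{\infty,\infty}}\simeq\sup_j(j+3)|a_j|$ directly, which bypasses entirely the ``careful bookkeeping'' you flag as the real work in building an $\mathcal S'$ element with prescribed $\|\triangle_q f\|_\infty$. Second, for the direction $B^0_{\infty,1}\not\hookrightarrow B^{0,1}_{\infty,\infty}$, the paper does not construct a single counterexample but instead exhibits, for each $k\in\mathbb N$, an $f_k$ (a single Dirac mass at level $k$) with $\|f_k\|_{B^{0,1}_{\infty,\infty}}\simeq1$ and $\|f_k\|_{B^0_{\infty,1}}\simeq 1/k$, so no bounded embedding can exist. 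Your sparse-sequence construction $a_{q_k}=2^{-k}$ with $q_k$ growing fast is equally valid and arguably more direct, since it yields an explicit element of $B^0_{\infty,1}\setminus B^{0,1}_{\infty,\infty}$; the paper's device trades that for a simpler realization of the test objects.
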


\begin{proof}
It suffices to prove $(3)$.\ Similar to \cite{Yoneda10}, we set
\begin{align*}
{f}=\sum_{j=-1}^\infty a_j\delta_{2^j} \text{\ for \
}\{a_j\}_{j=-1}^\infty\subset\mathbb{R},
\end{align*}
where $\delta_z$ is the Dirac delta function massed at
$z\in\mathbb{R}^n$. Then we have
\begin{align*}
\|f\|_{B^{0}_{\infty,1}}\simeq\sum_{j\ge-1}|a_j|, \quad
\|f\|_{B^{0,1}_{\infty,\infty}}\simeq\sup_{j\ge-1}(j+3)|a_j|.
\end{align*}
So if we take $a_j=(j+3)^{-1}$ for $j\ge0$ and $a_j=0$ for $j<0$,
then $\|f\|_{B^{0,1}_{\infty,\infty}}\simeq1$,
 $\|f\|_{B^{0}_{\infty,1}}=\infty$. Therefore,
$B^{0,1}_{\infty,\infty}$ is not included in $B^{0}_{\infty,1}$.
Next, let $\delta_{jk}$ be Kronecker's delta. For fixed
$k\in\mathbb{N}$, if we take $a_j=\frac{\delta_{kj}}{3+j}$ for
$j\ge0$ and $a_j=0$ for $j<0$, then we have
$\|f\|_{B^{0,1}_{\infty,\infty}}\simeq1$ and
$\|f\|_{B^{0}_{\infty,1}}=\frac{1}{k}$. Since $k$ is arbitrary,
$B^{0}_{\infty,1}$ is not included in $B^{0,1}_{\infty,\infty}$.
\end{proof}

We now begin our discussion on paradifferential calculus.

\vskip.07cm
\begin{lemma}\label{lem4.2.4} For any $p,p_1,p_2,r\in [1,\infty]$
satisfying $\frac{1}{p}=\frac{1}{p_1}+\frac{1}{p_2}$,
the bilinear map $(u,v)\mapsto uv$ is bounded from
$(B^{0}_{p_1,1}\cap B^{0,1}_{p_1,\infty})\times B^{0}_{p_2,r}$ to
$B^{0}_{p,r}$, i.e.
\begin{align}\label{eq4.2.4}
\|uv\|_{B^{0}_{p,r}}\lesssim \|u\|_{B^{0}_{p_1,1}\cap
B^{0,1}_{p_1,\infty}}\|v\|_{B^{0}_{p_2,r}}.
\end{align}
\end{lemma}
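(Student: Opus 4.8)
The plan is to decompose the product $uv$ using Bony's paraproduct decomposition,
\[
uv = T_uv + T_vu + R(u,v), \qquad T_uv = \sum_q S_{q-1}u\,\triangle_q v,\quad R(u,v) = \sum_{|l|\le1}\sum_q \triangle_q u\,\triangle_{q+l}v,
\]
and to estimate each of the three pieces in $B^0_{p,r}$ separately, using \eqref{eq4.1.8}--\eqref{eq4.1.10} to control the supports of the Fourier transforms of the summands. For the low-high term $T_uv$: by \eqref{eq4.1.9}, $\triangle_k(S_{q-1}u\,\triangle_q v)=0$ unless $|k-q|\le4$, so $\|\triangle_k(T_uv)\|_p \lesssim \sum_{|q-k|\le4}\|S_{q-1}u\|_\infty\|\triangle_q v\|_{p_2}$ by Hölder (note $\tfrac1p=\tfrac1\infty+\tfrac1p$ fails, so here I must instead use $\frac1p=\frac1{p_1}+\frac1{p_2}$ and $\|S_{q-1}u\|_{p_1}\le\|u\|_{B^0_{p_1,1}}$ via Lemma \ref{lem4.2.2}(2), since $S_{q-1}u = \sum_{p\le q-2}\triangle_p u$ has $\|S_{q-1}u\|_{p_1}\le\sum_{p\ge-1}\|\triangle_p u\|_{p_1} = \|u\|_{B^0_{p_1,1}}$). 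Thus $\|\triangle_k(T_uv)\|_p\lesssim \|u\|_{B^0_{p_1,1}}\sum_{|q-k|\le4}\|\triangle_q v\|_{p_2}$, and taking $\ell^r$ in $k$ gives $\|T_uv\|_{B^0_{p,r}}\lesssim \|u\|_{B^0_{p_1,1}}\|v\|_{B^0_{p_2,r}}$ — this uses only the $B^0_{p_1,1}$ norm of $u$, not the logarithmic norm.

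For the high-low term $T_vu = \sum_q S_{q-1}v\,\triangle_q u$: again by \eqref{eq4.1.9} the frequencies are localized, $\|\triangle_k(T_vu)\|_p\lesssim \sum_{|q-k|\le4}\|S_{q-1}v\|_{p_2}\|\triangle_q u\|_{p_1}$. The obstacle here is that $S_{q-1}v$ cannot be bounded by $\|v\|_{B^0_{p_2,r}}$ with a uniform constant when $r>1$; instead $\|S_{q-1}v\|_{p_2}\le \sum_{p\le q-2}\|\triangle_p v\|_{p_2}\lesssim (3+q)^{1/r'}\big(\sum_p 2^{0}\|\triangle_p v\|_{p_2}^r\big)^{1/r}$ by Hölder in the summation index — this produces a factor $(3+q)^{1/r'}$, which is exactly where the logarithmic norm of $u$ must absorb a logarithmic loss. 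Writing $\|\triangle_q u\|_{p_1}\le (3+q)^{-1}\|u\|_{B^{0,1}_{p_1,\infty}}$, the two logarithmic factors combine to $(3+q)^{1/r'-1}=(3+q)^{-1/r}$, which is $\ell^r$-summable in $q$ only marginally; more carefully one pairs the $(3+q)^{-1}$ from the $B^{0,1}_{p_1,\infty}$ norm against the $\ell^1$-type sum defining $S_{q-1}v$ to get $\|S_{q-1}v\,\triangle_q u\|_p\lesssim (3+q)^{-1}\|u\|_{B^{0,1}_{p_1,\infty}}\sum_{p\le q-2}\|\triangle_p v\|_{p_2}$ and then applies a discrete convolution / Schur test argument in $(k,q,p)$ to land in $\ell^r_k$ with norm $\lesssim \|u\|_{B^{0,1}_{p_1,\infty}}\|v\|_{B^0_{p_2,r}}$. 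I expect this to be the main obstacle: keeping track of the logarithmic weights so that exactly the gain $(3+q)^{-1}$ offsets the loss $(3+q)$ coming from summing $v$'s blocks up to level $q$.

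For the remainder term $R(u,v)=\sum_{|l|\le1}\sum_q \triangle_q u\,\triangle_{q+l}v$: by \eqref{eq4.1.10} the summand has Fourier support in a ball of radius $\sim 2^q$, so $\triangle_k R(u,v)$ picks up only $q\ge k-3$; by Bernstein's inequality (Lemma \ref{lem4.2.1}, \eqref{eq4.2.1}) I do not lose derivatives since all indices are $0$, and $\|\triangle_k R(u,v)\|_p \lesssim \sum_{q\ge k-3}\|\triangle_q u\|_{p_1}\|\triangle_{q+l}v\|_{p_2}\le \|u\|_{B^{0,1}_{p_1,\infty}}\sum_{q\ge k-3}(3+q)^{-1}\|\triangle_{q+l}v\|_{p_2}$, and again a Schur-test/Young-inequality argument on the (now one-sided) sum over $q\ge k-3$, using the summability of $(3+q)^{-1}$ against the $\ell^r$ sequence $\{\|\triangle_q v\|_{p_2}\}$, yields $\|R(u,v)\|_{B^0_{p,r}}\lesssim \|u\|_{B^{0,1}_{p_1,\infty}}\|v\|_{B^0_{p_2,r}}$. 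Here the one-sided tail $\sum_{q\ge k-3}$ is harmless precisely because the weight $(3+q)^{-1}$ is (just barely) enough — alternatively one can use $B^{0}_{p_1,1}\hookrightarrow B^{0,1}_{p_1,\infty}\hookrightarrow \ell^1$-type control is false, so one genuinely uses the logarithmic norm. Combining the three estimates gives \eqref{eq4.2.4}, with the $B^0_{p_1,1}$ norm handling $T_uv$ and the $B^{0,1}_{p_1,\infty}$ norm handling $T_vu$ and $R(u,v)$.
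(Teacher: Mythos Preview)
Your overall strategy---Bony decomposition plus frequency localization via \eqref{eq4.1.9}--\eqref{eq4.1.10}---matches the paper's, and your treatment of $T_uv$ is essentially the same. However, there is a genuine gap in your handling of $T_vu$ and $R(u,v)$: the claim that the logarithmic norm $\|u\|_{B^{0,1}_{p_1,\infty}}$ alone suffices for both terms, uniformly in $r\in[1,\infty]$, is false at the endpoints.

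For $R(u,v)$ you write that the weight $(3+q)^{-1}$ is ``(just barely) enough'' and invoke ``summability of $(3+q)^{-1}$'', but $\sum_q(3+q)^{-1}$ diverges. Concretely, your estimate gives $\|\triangle_k R(u,v)\|_p\lesssim\|u\|_{B^{0,1}_{p_1,\infty}}\sum_{q\ge k-3}(3+q)^{-1}\|\triangle_{q+l}v\|_{p_2}$; taking $\ell^r_k$ norms requires the operator $a\mapsto\bigl(\sum_{q\ge k-3}(3+q)^{-1}a_q\bigr)_k$ to be bounded on $\ell^r$. By the dual Hardy inequality this holds for $r<\infty$, but it \emph{fails} for $r=\infty$ (take $a_q\equiv1$: the sum diverges for every $k$). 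Symmetrically, your $T_vu$ argument reduces to the Hardy-type operator $a\mapsto\bigl((3+k)^{-1}\sum_{j\le k}a_j\bigr)_k$, which is bounded on $\ell^r$ for $r>1$ but \emph{fails} for $r=1$ (Fubini gives $\sum_k(3+k)^{-1}\sum_{j\le k}a_j=\sum_j a_j\sum_{k\ge j}(3+k)^{-1}=\infty$). Your remark that $(3+q)^{-1/r}$ is ``$\ell^r$-summable only marginally'' is in fact the statement that $\sum_q(3+q)^{-1}=\infty$, i.e.\ not summable at all.

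The paper sidesteps this by treating only the two endpoints $r=1$ and $r=\infty$ directly and then interpolating. At $r=\infty$ it estimates $T_vu$ using $\|u\|_{B^{0,1}_{p_1,\infty}}$ (exactly your idea: the factor $(q+3)$ from $\|S_{q-1}v\|_{p_2}\lesssim(q+3)\|v\|_{B^0_{p_2,\infty}}$ is cancelled by $(q+3)^{-1}$ from the log norm), but estimates $R(u,v)$ using $\|u\|_{B^0_{p_1,1}}$ instead, via the trivial bound $\sum_q\|\triangle_q u\|_{p_1}\|\triangle_{q+l}v\|_{p_2}\le\|u\|_{B^0_{p_1,1}}\|v\|_{B^0_{p_2,\infty}}$. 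At $r=1$ the roles swap: $T_vu$ uses $\|u\|_{B^0_{p_1,1}}$ (via $\|S_{q-1}v\|_{p_2}\le\|v\|_{p_2}\lesssim\|v\|_{B^0_{p_2,1}}$), while $R(u,v)$ uses $\|u\|_{B^{0,1}_{p_1,\infty}}$ (Fubini turns $\sum_k\sum_{q\ge k-3}$ into $\sum_q(q+O(1))\|\triangle_q u\|_{p_1}\|\triangle_{q+l}v\|_{p_2}$, and the factor $q$ is absorbed by the log norm). So both norms of $u$ are genuinely needed, but in \emph{different} places at the two endpoints; your summary that the log norm alone handles $T_vu$ and $R(u,v)$ cannot work uniformly in $r$.
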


\begin{proof} Following Bony \cite{Bony81} we write
\begin{align*}
uv=\mathcal{T}(u,v)+\mathcal{T}(v,u)+\mathcal{R}(u,v),
\end{align*}
where
\begin{align*}
\mathcal{T}(u,v)=\sum_{q\ge-1}S_{q-1}u\triangle_qv, \quad
\mathcal{R}(u,v)=\sum_{l=-1}^1\sum_{q\ge-1}\triangle_qu\triangle_{q+l}v.
\end{align*}
The estimate of $\mathcal{T}(u,v)$ is simple. Indeed, by Proposition
1.4.1 (i) of \cite{Danchin05} we know that for any $p,r\in
[1,\infty]$, $\mathcal{T}$ is bounded from
$L^\infty_x\times{B^{0}_{p,r}}$ to $B^{0}_{p,r}$. By slightly
modifying the proof of that proposition, we see that for any
$p,p_1,p_2,r\in [1,\infty]$ satisfying
$\frac{1}{p}=\frac{1}{p_1}+\frac{1}{p_2}$, $\mathcal{T}$ is also
bounded from $L^{p_1}_x\times{B^{0}_{p_2,r}}$ to $B^{0}_{p,r}$. Thus
for any $p,p_1,p_2,r\in [1,\infty]$ we have
\begin{align}\label{eq4.2.5}
\|\mathcal{T}(u,v)\|_{B^{0}_{p,r}}\lesssim
\|u\|_{L^{p_1}}\|v\|_{B^{0}_{p_2,r}}\lesssim
\|u\|_{B^{0}_{p_1,1}}\|v\|_{B^{0}_{p_2,r}}.
\end{align}
In what follows we estimate $\mathcal{T}(v,u)$ and
$\mathcal{R}(u,v)$. By interpolation, it suffices to consider the
two end point cases $r=1$ and $r=\infty$.

To estimate $\|\mathcal{T}(v,u)\|_{B^{0}_{p,1}}$, we use
\eqref{eq4.1.9} to deduce
\begin{align}\label{eq4.2.6}
 \|\mathcal{T}(v,u)\|_{B^{0}_{p,1}}=&\sum_{k\ge-1}
 \|\triangle_k(\sum_{q\ge-1}S_{q-1}v\triangle_qu)\|_p
 \lesssim \sum_{k\ge-1}\sum_{|q-k|\le4,q\ge-1}\|S_{q-1}v\|_{p_2}\|\triangle_{q}u\|_{p_1}\nonumber\\
 \lesssim &\|v\|_{p_2}\sum_{k\ge-1}\|\triangle_{k}u\|_{p_1}
 \lesssim \|u\|_{B^{0}_{p_1,1}} \|v\|_{B^0_{p_2,1}}.
\end{align}
To estimate $\|\mathcal{T}(v,u)\|_{B^{0}_{p,\infty}}$ we first note
that
\begin{align*}
 \|S_{q-1}v\|_{p_2}\lesssim &\sum_{j=-1}^{q-2}\|\triangle_jv\|_{p_2}
 \lesssim (q+3)\|v\|_{B^0_{p_2,\infty}}.
\end{align*}
Using this inequality and \eqref{eq4.1.9} we see that
\begin{align}\label{eq4.2.7}
 \|\mathcal{T}(v,u)\|_{B^{0}_{p,\infty}}=&\sup_{k\ge-1}
 \|\triangle_k(\sum_{q\ge-1}S_{q-1}v\triangle_qu)\|_p
 \lesssim \sup_{k\ge-1}\sum_{|q-k|\le4,q\ge-1}\|S_{q-1}v\|_{p_2}\|\triangle_{q}u\|_{p_1}\nonumber\\
 \lesssim &\|v\|_{B^0_{p_2,\infty}}\sup_{k\ge-1}\sum_{|q-k|\le4,q\ge-1}(q+3)\|\triangle_{q}u\|_{p_1}
 \lesssim \|u\|_{B^{0,1}_{p_1,\infty}}\|v\|_{B^0_{p_2,\infty}}.
\end{align}
To estimate $\|\mathcal{R}(u,v)\|_{B^{0}_{p,1}}$, we write
\begin{align*}
 \|\mathcal{R}(u,v)\|_{B^{0}_{p,1}}\lesssim &\sum_{l=-1}^1\sum_{k\ge-1}
 \|\triangle_k(\sum_{q\ge-1}\triangle_qu\triangle_{q+l}v)\|_p\nonumber\\
 \lesssim &\sum_{l=-1}^1\sum_{k=-1}^3
 \|\triangle_{k}(\sum_{q\ge-1}\!\!\triangle_qu\triangle_{q+l}v)\|_{p}+
 \sum_{l=-1}^1\sum_{k\ge4}
 \|\triangle_{k}(\sum_{q\ge-1}\triangle_qu\triangle_{q+l}v)\|_{p}\nonumber\\
 :=& I_{1}+I_{2}.
\end{align*}
For $I_1$ we have
\begin{align*}
 I_{1}&\lesssim\sum_{l=-1}^1\|\sum_{q\ge-1}\triangle_qu\triangle_{q+l}v\|_{p}
 \lesssim\sum_{l=-1}^1\sum_{q\ge-1}\|\triangle_qu\|_{p_1}
 \|\triangle_{q+l}v\|_{p_2}\lesssim
 \|u\|_{B^{0}_{p_1,\infty}}\|v\|_{B^{0}_{p_2,1}}.
\end{align*}
For $I_2$, by using \eqref{eq4.1.10} we deduce
\begin{align*}
  I_{2}\lesssim&\sum_{l=-1}^1\sum_{k\ge4}\sum_{q\ge{k-3}}\|\triangle_qu\|_{p_1}\|\triangle_{q+l}v\|_{p_2}
 \lesssim\sum_{l=-1}^1\sum_{q\ge1}\sum_{4\le k\le3+q}\|\triangle_qu\|_{p_1}\|\triangle_{q+l}v\|_{p_2}
 \nonumber\\
 \lesssim&\sum_{l=-1}^1\sum_{q\ge1}(q+3)\|\triangle_qu\|_{p_1}\|\triangle_{q+l}v\|_{p_2}
 \lesssim \|u\|_{B^{0,1}_{p_1,\infty}}\|v\|_{B^{0}_{p_2,1}}.
\end{align*}
Hence
\begin{align}\label{eq4.2.8}
 \|\mathcal{R}(u,v)\|_{B^{0}_{p,1}}\lesssim\|u\|_{B^{0,1}_{p_1,\infty}}\|v\|_{B^{0}_{p_2,1}}.
\end{align}
Similarly we have
\begin{align}\label{eq4.2.9}
    \|\mathcal{R}(u,v)\|_{B^{0}_{p,\infty}}
  &\lesssim\sum_{l=-1}^1\sup_{k\ge-1}\|\triangle_k(\sum_{q\ge-1}\triangle_qu\triangle_{q+l}v)\|_{p}
  \nonumber\\&\lesssim\sum_{l=-1}^1\Big(\sum_{q\ge-1}\|\triangle_qu\|_{p_1}\|\triangle_{q+l}v\|_{p_2}\Big)
  \nonumber\\&\lesssim\|u\|_{B^{0}_{p_1,1}}\|v\|_{B^{0}_{p_2,\infty}}.
\end{align}
From \eqref{eq4.2.5}$\sim$\eqref{eq4.2.9} and interpolation, we
obtain the desired estimate. This completes the proof of Lemma
\ref{lem4.2.4}.
\end{proof}

\vskip.07cm
\begin{lemma}\label{lem4.2.5}
For any $p,p_2,p_2\in [1,\infty]$ satisfying
$\frac{1}{p}=\frac{1}{p_1}+\frac{1}{p_2}$, the bilinear map
$(u,v)\mapsto uv$ is bounded from $(B^{0}_{p_1,1}\cap
B^{0,1}_{p_1,\infty})\times (B^{0}_{p_2,1}\cap
B^{0,1}_{p_2,\infty})$ to $B^{0}_{p,1}\cap B^{0,1}_{p,\infty}$,
i.e.
\begin{align}\label{eq4.2.10}
\|uv\|_{B^{0}_{p,1}\cap B^{0,1}_{p,\infty}}\lesssim
\|u\|_{B^{0}_{p_1,1}\cap B^{0,1}_{p_1,\infty}}
\|v\|_{B^{0}_{p_2,1}\cap B^{0,1}_{p_2,\infty}}.
\end{align}
In particular, $B^{0}_{\infty,1}\cap B^{0,1}_{\infty,\infty}$ is a
Banach algebra.
\end{lemma}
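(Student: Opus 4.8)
The plan is to reduce Lemma \ref{lem4.2.5} to Lemma \ref{lem4.2.4} by a symmetry and bookkeeping argument over the two norms that constitute the space $B^{0}_{p,1}\cap B^{0,1}_{p,\infty}$. Write $uv=\mathcal{T}(u,v)+\mathcal{T}(v,u)+\mathcal{R}(u,v)$ as in the proof of Lemma \ref{lem4.2.4}. The point is that Lemma \ref{lem4.2.4} already gives control of $\|uv\|_{B^0_{p,r}}$ for every $r\in[1,\infty]$; we now need in addition the logarithmically weighted bound $\|uv\|_{B^{0,1}_{p,\infty}}\lesssim \|u\|_{B^0_{p_1,1}\cap B^{0,1}_{p_1,\infty}}\|v\|_{B^0_{p_2,1}\cap B^{0,1}_{p_2,\infty}}$, and then take $r=1$ in Lemma \ref{lem4.2.4} for the $B^0_{p,1}$ part, using the embedding $B^0_{p_i,1}\cap B^{0,1}_{p_i,\infty}\hookrightarrow B^0_{p_i,1}$.

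First I would establish the $B^{0,1}_{p,\infty}$ estimate term by term, revisiting the three pieces from the proof of Lemma \ref{lem4.2.4} but now inserting the factor $(k+3)$ and taking a supremum over $k$. For the paraproduct $\mathcal{T}(u,v)=\sum_q S_{q-1}u\,\triangle_q v$, frequency localization \eqref{eq4.1.9} confines $\triangle_k$ to $|q-k|\le 4$, so $(k+3)\sim(q+3)$ on the relevant range; then $\|S_{q-1}u\|_{p_1}\lesssim\|u\|_{L^{p_1}}\lesssim\|u\|_{B^0_{p_1,1}}$ and $(q+3)\|\triangle_q v\|_{p_2}\lesssim\|v\|_{B^{0,1}_{p_2,\infty}}$ give the bound $\|\mathcal{T}(u,v)\|_{B^{0,1}_{p,\infty}}\lesssim\|u\|_{B^0_{p_1,1}}\|v\|_{B^{0,1}_{p_2,\infty}}$. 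By symmetry the same argument handles $\mathcal{T}(v,u)$, bounding it by $\|v\|_{B^0_{p_2,1}}\|u\|_{B^{0,1}_{p_1,\infty}}$. For the remainder $\mathcal{R}(u,v)=\sum_{|l|\le1}\sum_q\triangle_q u\,\triangle_{q+l}v$, I split as in Lemma \ref{lem4.2.4} into low output frequencies $-1\le k\le 3$ and high output frequencies $k\ge4$. On the low range $(k+3)$ is bounded, so the estimate $I_1\lesssim\|u\|_{B^0_{p_1,\infty}}\|v\|_{B^0_{p_2,1}}$ already obtained suffices. On the high range, \eqref{eq4.1.10} forces $q\ge k-3$, hence $(k+3)\lesssim(q+3)$, and the geometric decay is gone; one is left with $\sup_{k\ge4}\sum_{q\ge k-3}(q+3)\|\triangle_q u\|_{p_1}\|\triangle_{q+l}v\|_{p_2}$, which I bound by $\sum_q (q+3)\|\triangle_q u\|_{p_1}\|\triangle_{q+l}v\|_{p_2}\lesssim \|u\|_{B^{0,1}_{p_1,\infty}}\|v\|_{B^0_{p_2,1}}$, absorbing the weight onto $u$ and leaving $v$ summed without weight. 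Each bound on the right is controlled by $\|u\|_{B^0_{p_1,1}\cap B^{0,1}_{p_1,\infty}}\|v\|_{B^0_{p_2,1}\cap B^{0,1}_{p_2,\infty}}$, so adding the three contributions yields the claimed $B^{0,1}_{p,\infty}$ estimate.

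Combining this with the $r=1$ case of \eqref{eq4.2.4}, namely $\|uv\|_{B^0_{p,1}}\lesssim\|u\|_{B^0_{p_1,1}\cap B^{0,1}_{p_1,\infty}}\|v\|_{B^0_{p_2,1}}\lesssim\|u\|_{B^0_{p_1,1}\cap B^{0,1}_{p_1,\infty}}\|v\|_{B^0_{p_2,1}\cap B^{0,1}_{p_2,\infty}}$, gives \eqref{eq4.2.10}. Taking $p_1=p_2=p=\infty$ and $u,v\in B^0_{\infty,1}\cap B^{0,1}_{\infty,\infty}$ shows this space is closed under multiplication with $\|uv\|\lesssim\|u\|\|v\|$; since it is also a Banach space (completeness being routine, each summand/sup being a norm and the intersection of two Banach spaces with comparable scales of truncations being complete), it is a Banach algebra. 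The only mildly delicate point, and the one I would write out carefully, is the high-frequency part of $\mathcal{R}$: one must check that throwing away the summation geometric decay and replacing it with the single weight $(q+3)$ is exactly compensated by the fact that $B^{0,1}_{p_1,\infty}$ supplies that weight on the $u$ side while $B^0_{p_2,1}$ supplies absolute summability on the $v$ side — in other words the asymmetric roles of the two factors are what make the estimate close, and this is why one needs both the $B^0$ and the $B^{0,1}$ components of each factor rather than just one of them.
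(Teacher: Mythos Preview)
Your proof is correct and follows essentially the same route as the paper's: both reduce to establishing the $B^{0,1}_{p,\infty}$ bound via the Bony decomposition, treat $\mathcal{T}(u,v)$ and $\mathcal{T}(v,u)$ by localizing $(k+3)\sim(q+3)$ through \eqref{eq4.1.9}, and handle $\mathcal{R}(u,v)$ by a low/high split in $k$ with the weight $(q+3)$ absorbed into one factor while the other is summed in $\ell^1$. The only cosmetic differences are the exact cutoff index for the low/high split and which factor carries the logarithmic weight in the remainder estimate (you place it on $u$, the paper on $v$), both immaterial by the symmetry of $\mathcal{R}$.
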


\begin{proof}
By Lemma \ref{lem4.2.4}, we only need to prove that
\begin{align}\label{eq4.2.11}
\|uv\|_{B^{0,1}_{p,\infty}}\lesssim \|u\|_{B^{0}_{p_1,1}\cap
B^{0,1}_{p_1,\infty}}
\|v\|_{B^{0}_{p_2,1}\cap{B}^{0,1}_{p_2,\infty}}.
\end{align}
As before we decompose $uv$ into the sum of $\mathcal{T}(u,v)$,
$\mathcal{T}(v,u)$ and $\mathcal{R}(u,v)$. To estimate
$\|\mathcal{T}(u,v)\|_{B^{0,1}_{p,\infty}}$, we use \eqref{eq4.1.9}
to deduce
\begin{align}\label{eq4.2.12}
 \|\mathcal{T}(u,v)\|_{B^{0,1}_{p,\infty}}=&\sup_{k\ge-1}
 (k+3)\|\triangle_k(\sum_{q\ge-1}S_{q-1}u\triangle_qv)\|_p\nonumber\\
 \lesssim &\sup_{k\ge-1}(k+3)\|\triangle_k(\sum_{|q-k|\le 4,q\ge-1}S_{q-1}u\triangle_qv)\|_p\nonumber\\
 \lesssim &\sup_{k\ge-1}(k+3)\sum_{|q-k|\le4,q\ge-1}\|S_{q-1}u\|_{p_1}\|\triangle_{q}v\|_{p_2}\nonumber\\
 \lesssim &\|u\|_{p_1}\sup_{q\ge-1}(q+3)\|\triangle_{q}v\|_{p_2}\nonumber\\
 \lesssim &\|u\|_{B^0_{p_1,1}}\|v\|_{B^{0,1}_{p_2,\infty}}.
\end{align}
The estimate of $\|\mathcal{T}(v,u)\|_{B^{0,1}_{p,\infty}}$ is
similar, with minor modifications. Indeed,
\begin{align}\label{eq4.2.13}
 \|\mathcal{T}(v,u)\|_{B^{0,1}_{p,\infty}}=&\sup_{k\ge-1}
 (k+3)\|\triangle_k(\sum_{q\ge-1}S_{q-1}v\triangle_qu)\|_p\nonumber\\
 \lesssim &\sup_{k\ge-1}(k+3)\sum_{|q-k|\le4,q\ge-1}\|S_{q-1}v\|_{p_2}\|\triangle_{q}u\|_{p_1}\nonumber\\
 \lesssim &\|v\|_{p_2}\sup_{q\ge-1}(q+3)\|\triangle_{q}u\|_{p_1}\nonumber\\
 \lesssim &\|u\|_{B^{0,1}_{p_1,\infty}}\|v\|_{B^0_{p_2,1}}.
\end{align}
To estimate $\|\mathcal{R}(u,v)\|_{B^{0,1}_{p,\infty}}$ we write
\begin{align*}
\|\mathcal{R}(u,v)\|_{B^{0,1}_{p,\infty}}\!&\!\le\!\!\sum_{l=-1}^1
\sup_{k\ge-1}(3+k)\|\triangle_k(\sum_{q\ge-1}\triangle_qu\triangle_{q+l}v)\|_{p}
\\&\le\sum_{l=-1}^1\sup_{7\ge k\ge-1}(3+k)\|\triangle_k(\sum_{q\ge-1}\triangle_qu\triangle_{q+l}v)\|_{p}
\\&\quad+\sum_{l=-1}^1\sup_{q+3\ge
k\ge8}(3+k)\|\triangle_k(\sum_{q\ge5}\triangle_qu\triangle_{q+l}v)\|_{p}\\:=&I_{3}+I_{4}.
\end{align*}
For $I_{3}$ we have
\begin{align*}
 I_{3}&\lesssim
 \sum_{l=-1}^1\sum_{q\ge-1}\|\triangle_qu\|_{p_1}\|\triangle_{q+l}v\|_{p_2}
 \lesssim\|u\|_{B^{0}_{p_1,1}}\|v\|_{B^{0}_{p_2,\infty}}
 \lesssim\|u\|_{B^{0}_{p_1,1}}\|v\|_{B^{0,1}_{p_2,\infty}}.
\end{align*}
For $I_{4}$ we have
\begin{align*}
I_{4}&=\sum_{l=-1}^1\sup_{q+3\ge k\ge8}(3+k)\|\triangle_k(\sum_{q\ge5}\triangle_qu\triangle_{q+l}v)\|_{p}\\
&\lesssim\sum_{l=-1}^1\sum_{q\ge4}(3+q)\|\triangle_qu\|_{p_1}\|\triangle_{q+l}v\|_{p_2}
\lesssim\|u\|_{B^{0}_{p_1,1}}\|v\|_{B^{0,1}_{p_2,\infty}}.
\end{align*}
Hence
\begin{align}\label{eq4.2.14}
 \|\mathcal{R}(u,v)\|_{B^{0,1}_{p,\infty}}\lesssim\|u\|_{B^{0}_{p_1,1}}\|v\|_{B^{0,1}_{p_2,\infty}}.
\end{align}
Combining \eqref{eq4.2.12}$\sim$\eqref{eq4.2.14}, we see that
\eqref{eq4.2.11} follows. This prove Lemma \ref{lem4.2.5}.
\end{proof}



\begin{lemma}\label{lem4.2.6}
Let $p$, $p_i$, $r$, $\rho$, $\rho_i\in[1,\infty]$ $(i=1, 2)$ be
such that
 $\frac{1}{\rho}=\frac{1}{\rho_1}+\frac{1}{\rho_2}$ and $\frac{1}{p}=\frac{1}{p_1}+\frac{1}{p_2}$.
 Then we have
\begin{align}\label{eq4.2.15}
 \|uv\|_{\tilde{L}^\rho_T(B^0_{p,r})}\lesssim\|u\|_{\tilde{L}^{\rho_1}_T(B^0_{p_1,1})
 \cap\tilde{L}^{\rho_1}_T(B^{0,1}_{p_1,\infty})}
 \|v\|_{\tilde{L}^{\rho_2}_T(B^0_{p_2,r})}.
\end{align}
\end{lemma}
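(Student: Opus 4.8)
The plan is to rerun the Bony-decomposition argument of Lemma~\ref{lem4.2.4}, now carrying an additional Hölder inequality in the time variable. Writing $uv=\mathcal{T}(u,v)+\mathcal{T}(v,u)+\mathcal{R}(u,v)$ with the same paraproduct and remainder operators, I would estimate the $\tilde L^\rho_T(B^0_{p,r})$-norm of each of the three pieces. Since a Chemin--Lerner space is (up to a retraction) an $\ell^r$-sum over dyadic blocks of the Banach spaces $L^\rho_T L^p_x$, real interpolation in $r$ reduces the statement to the two endpoints $r=1$ and $r=\infty$, exactly as in Lemma~\ref{lem4.2.4}.

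The only new analytic ingredient is the elementary bilinear inequality
\[
\|\triangle_k(f g)\|_{L^\rho_T L^p_x}\le\|fg\|_{L^\rho_TL^p_x}\lesssim\|f\|_{L^{\rho_1}_TL^{p_1}_x}\|g\|_{L^{\rho_2}_TL^{p_2}_x},
\]
obtained from the uniform boundedness of $\triangle_k$ on $L^p_x$, Hölder in $x$ with exponents $(p_1,p_2)$, and Hölder in $t$ with exponents $(\rho_1,\rho_2)$; the hypotheses $1/p=1/p_1+1/p_2$ and $1/\rho=1/\rho_1+1/\rho_2$ are precisely what make this valid. One also has $\|S_{q-1}f\|_{L^{\rho_i}_TL^{p_i}_x}\lesssim\|f\|_{L^{\rho_i}_TL^{p_i}_x}$ uniformly in $q$, and $\|f\|_{L^{\rho_i}_TL^{p_i}_x}\le\|f\|_{\tilde L^{\rho_i}_T(B^0_{p_i,1})}$.

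With these in hand I would reproduce the frequency bookkeeping of Lemma~\ref{lem4.2.4} line by line. For $\mathcal{T}(u,v)$: use \eqref{eq4.1.9} to restrict to $|q-k|\le4$, bound $\|S_{q-1}u\|_{L^{\rho_1}_TL^{p_1}_x}\lesssim\|u\|_{\tilde L^{\rho_1}_T(B^0_{p_1,1})}$, and sum (for $r=1$) or take the supremum (for $r=\infty$) over $k$ of $\|\triangle_qv\|_{L^{\rho_2}_TL^{p_2}_x}$. For $\mathcal{T}(v,u)$: mimic \eqref{eq4.2.6}--\eqref{eq4.2.7}, using $\|S_{q-1}v\|_{L^{\rho_2}_TL^{p_2}_x}\lesssim\|v\|_{\tilde L^{\rho_2}_T(B^0_{p_2,1})}$ when $r=1$ and the logarithmic bound $\|S_{q-1}v\|_{L^{\rho_2}_TL^{p_2}_x}\lesssim(q+3)\|v\|_{\tilde L^{\rho_2}_T(B^0_{p_2,\infty})}$ when $r=\infty$, the factor $(q+3)$ being absorbed into $\|u\|_{\tilde L^{\rho_1}_T(B^{0,1}_{p_1,\infty})}$. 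For $\mathcal{R}(u,v)$: split the output frequency into $-1\le k\le3$ and $k\ge4$; the low part is bounded by $\sum_q\|\triangle_qu\|_{L^{\rho_1}_TL^{p_1}_x}\|\triangle_{q+l}v\|_{L^{\rho_2}_TL^{p_2}_x}$, while for $k\ge4$ the support condition \eqref{eq4.1.10} again produces a factor $(q+3)$ that is absorbed into the logarithmically weighted norm of $u$. Collecting the $r=1$ and $r=\infty$ bounds and interpolating yields \eqref{eq4.2.15}.

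I do not anticipate a genuine obstacle: this is the Chemin--Lerner counterpart of Lemma~\ref{lem4.2.4}, and the passage from pointwise-in-time to time-integrated estimates is mechanical once the $(x,t)$-Hölder inequality above is available. The only point needing a word of care is the reduction to $r\in\{1,\infty\}$, i.e. the identity $(\tilde L^\rho_T(B^0_{p,1}),\tilde L^\rho_T(B^0_{p,\infty}))_{\theta,r}=\tilde L^\rho_T(B^0_{p,r})$ for $1/r=1-\theta$, which follows since these spaces are retracts of $\ell^r$-valued sequence spaces; the interpolation is then applied to the linear map $v\mapsto uv$ with $u$ held fixed.
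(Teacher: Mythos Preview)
Your proposal is correct and follows essentially the same route as the paper: Bony decomposition into $\mathcal{T}(u,v)+\mathcal{T}(v,u)+\mathcal{R}(u,v)$, then rerunning the frequency bookkeeping of Lemma~\ref{lem4.2.4} with an extra H\"older inequality in $t$ (exponents $\rho_1,\rho_2$) alongside the one in $x$. The paper's own proof is in fact terser than yours---it only writes out the $r=\infty$ estimate for $\mathcal{T}(u,v)$ and declares the remaining pieces ``similar''---so your more detailed outline (both endpoints, all three terms, plus the interpolation justification via retracts) is a faithful and slightly more complete version of the same argument.
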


\begin{proof}
The proof is similar to that of Lemma \ref{lem4.2.4}. Indeed, as
in the proof of Lemma \ref{lem4.2.4} we decompose $uv$ into the
sum of $\mathcal{T}(u,v)$, $\mathcal{T}(v,u)$ and
$\mathcal{R}(u,v)$. To estimate
$\|\mathcal{T}(u,v)\|_{\tilde{L}^\rho_T(B^{0}_{p,\infty})}$, we
use \eqref{eq4.1.9} to write
\begin{align}
 \|\mathcal{T}(u,v)\|_{\tilde{L}^\rho_T(B^{0}_{p,\infty})}=&\sup_{k\ge-1}
 \|\triangle_k(\sum_{q\ge-1}S_{q-1}u\triangle_qv)\|_{L^{\rho}_TL^p_x}\nonumber\\
 \lesssim &\sup_{k\ge-1}\|\triangle_k(\sum_{|q-k|\le 4,q\ge-1}S_{q-1}u\triangle_qv)\|_{L^\rho_TL^p_x}\nonumber\\
 \lesssim &\sup_{k\ge-1}\sum_{|q-k|\le4,q\ge-1}\|S_{q-1}u\|_{L^{\rho_1}_TL^{p_1}_x}\|\triangle_{q}v\|_{L^{\rho_2}_TL^{p_2}_{x}}\nonumber\\
 \lesssim
 &\|u\|_{\tilde{L}^{\rho_1}_T(B^{0}_{p_1,1})}\|v\|_{\tilde{L}^{\rho_2}_T(B^{0}_{p_2,\infty})}\nonumber.
\end{align}
The estimates of
$\|\mathcal{T}(v,u)\|_{\tilde{L}^\rho_T(B^{0}_{p,\infty})}$ and
$\|\mathcal{R}(u,v)\|_{\tilde{L}^\rho_T(B^{0}_{p,\infty})}$ are
similar and we omit the details here.
\end{proof}

\begin{lemma}\label{lem4.2.7}
Let $p$, $p_i$, $\rho$, $\rho_i\in[1,\infty]$ $(i=1, 2)$ be such
that $\frac{1}{\rho}=\frac{1}{\rho_1}+\frac{1}{\rho_2}$ and
$\frac{1}{p}=\frac{1}{p_1}+\frac{1}{p_2}$. Then we have
\begin{align}\label{eq4.2.16}
 \|uv\|_{\tilde{L}^\rho_T(B^0_{p,1})\cap\tilde{L}^\rho_T(B^{0,1}_{p,\infty})}\lesssim
 \|u\|_{\tilde{L}^{\rho_1}_T(B^0_{p_1,1})\cap\tilde{L}^{\rho_1}_T(B^{0,1}_{p_1,\infty})}
 \|v\|_{\tilde{L}^{\rho_2}_T(B^0_{p_2,1})\cap\tilde{L}^{\rho_2}_T(B^{0,1}_{p_2,\infty})}.
\end{align}
\end{lemma}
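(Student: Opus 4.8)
The plan is to adapt the proof of Lemma~\ref{lem4.2.4} and Lemma~\ref{lem4.2.5} to the Chemin-Lerner (time-dependent) setting, the only new ingredient being H\"older's inequality in the time variable with the exponent split $\frac{1}{\rho}=\frac{1}{\rho_1}+\frac{1}{\rho_2}$. First I would recall, exactly as in Lemma~\ref{lem4.2.4}, the Bony decomposition $uv=\mathcal{T}(u,v)+\mathcal{T}(v,u)+\mathcal{R}(u,v)$, and observe that by interpolation between $\tilde L^\rho_T(B^{0}_{p,1})$ and $\tilde L^\rho_T(B^{0,1}_{p,\infty})$ it suffices to bound the $\tilde L^\rho_T(B^{0}_{p,1})$ norm and the $\tilde L^\rho_T(B^{0,1}_{p,\infty})$ norm of each of the three pieces. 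For the $B^{0}_{p,1}$ component the argument is the $\tilde L^\rho$-version of estimates \eqref{eq4.2.6}, \eqref{eq4.2.8} (and the $I_1$, $I_2$ splitting there), and for the $B^{0,1}_{p,\infty}$ component it is the $\tilde L^\rho$-version of \eqref{eq4.2.12}, \eqref{eq4.2.13}, \eqref{eq4.2.14} (with the $I_3$, $I_4$ splitting). In each single dyadic-block estimate, wherever the static proof used $\|\triangle_q u\|_{p_1}\|\triangle_{q+l}v\|_{p_2}$ or $\|S_{q-1}u\|_{p_1}\|\triangle_q v\|_{p_2}$, I would instead use H\"older in $t$ to write $\|\triangle_q u\,\triangle_{q+l}v\|_{L^\rho_T L^p_x}\le \|\triangle_q u\|_{L^{\rho_1}_T L^{p_1}_x}\|\triangle_{q+l}v\|_{L^{\rho_2}_T L^{p_2}_x}$, after which the summations over $q$, $k$, $l$ are performed verbatim as in Section~2.

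Concretely, for $\mathcal{T}(u,v)$ in $\tilde L^\rho_T(B^{0,1}_{p,\infty})$ one uses \eqref{eq4.1.9} to restrict to $|q-k|\le4$, applies H\"older in $t$ and the Bernstein-type bound $\|S_{q-1}u\|_{L^{\rho_1}_T L^{p_1}_x}\lesssim\|u\|_{\tilde L^{\rho_1}_T(B^0_{p_1,1})}$, picks up the factor $(k+3)\sim(q+3)$ against $\|\triangle_q v\|_{L^{\rho_2}_T L^{p_2}_x}$ and takes the supremum to get $\|u\|_{\tilde L^{\rho_1}_T(B^0_{p_1,1})}\|v\|_{\tilde L^{\rho_2}_T(B^{0,1}_{p_2,\infty})}$; the symmetric term $\mathcal{T}(v,u)$ is handled the same way with the roles of the logarithmic weights swapped. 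For the remainder $\mathcal{R}(u,v)$ in $\tilde L^\rho_T(B^{0,1}_{p,\infty})$ one splits the sum over $k$ into low frequencies $k\le7$ (where $(3+k)$ is bounded and one simply uses $\sum_{q\ge-1}\|\triangle_q u\|_{L^{\rho_1}_T L^{p_1}_x}\|\triangle_{q+l}v\|_{L^{\rho_2}_T L^{p_2}_x}\lesssim\|u\|_{\tilde L^{\rho_1}_T(B^0_{p_1,1})}\|v\|_{\tilde L^{\rho_2}_T(B^{0}_{p_2,\infty})}$) and high frequencies, where \eqref{eq4.1.10} forces $q\ge k-3$ so the weight $(3+k)\le(3+q)$ can be absorbed into $\|v\|_{\tilde L^{\rho_2}_T(B^{0,1}_{p_2,\infty})}$ exactly as in the treatment of $I_4$. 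The $\tilde L^\rho_T(B^0_{p,1})$ bounds for all three pieces are the Chemin-Lerner analogues of \eqref{eq4.2.5}, \eqref{eq4.2.6}, \eqref{eq4.2.8}, obtained by the identical substitution.

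I do not expect a genuine obstacle here: the proof is essentially a mechanical transcription of Lemmas~\ref{lem4.2.4}--\ref{lem4.2.5}, and indeed Lemma~\ref{lem4.2.6} already carried out the $B^0_{p,r}$ part of this transcription. The one point that needs a little care is making sure that the interpolation step is legitimate for Chemin-Lerner spaces---this is standard (the spaces $\tilde L^\rho_T(B^{s,\alpha}_{p,r})$ are weighted $\ell^r$-sums of Banach spaces $L^\rho_T L^p_x$, and real interpolation acts on the weight/summation index exactly as in the classical Besov case, see \cite{Danchin05})---and that the factor of the logarithmic weight is never squared: in every bilinear term exactly one of the two factors carries the $(q+3)$ weight, so the output sits in $B^{0,1}_{p,\infty}$ with weight power $1$, not $2$. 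Once these bookkeeping points are in place, summing the six estimates and invoking interpolation yields \eqref{eq4.2.16}, completing the proof.

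\begin{proof}
The proof is a routine adaptation of the proofs of Lemmas~\ref{lem4.2.4} and \ref{lem4.2.5} to the Chemin-Lerner setting, using H\"older's inequality in the time variable with $\frac{1}{\rho}=\frac{1}{\rho_1}+\frac{1}{\rho_2}$ in place of the purely spatial product estimates. We omit the details.
\end{proof}
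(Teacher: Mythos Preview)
Your proposal is correct and matches the paper's approach exactly: the paper's own proof consists of a single sentence stating that the argument is similar to that of Lemma~\ref{lem4.2.5} and omitting the details, which is precisely what your formal proof block does (with the added remark about H\"older in time). One small quibble: the ``interpolation'' step you mention is unnecessary here, since the target is the intersection space $\tilde L^\rho_T(B^0_{p,1})\cap\tilde L^\rho_T(B^{0,1}_{p,\infty})$ and its norm is simply the sum of the two component norms---so bounding each separately is the definition, not a consequence of interpolation.
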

\begin{proof}
The proof is similar to that of Lemma \ref{lem4.2.5}; we thus omit
it.\end{proof}

We note that results obtained in Lemmas
\ref{lem4.2.4}$\sim$\ref{lem4.2.7} still hold for vector valued
functions.

\section{Proofs of Theorems \ref{thm4.1.2} and \ref{thm4.1.3}}

In this section, we give the proofs of Theorems \ref{thm4.1.2} and
\ref{thm4.1.3}. We need the following preliminary result:

\begin{lemma}\label{lem4.3.1}\textsc{(\cite{Cannone03}, \text{p}.189, Lemma
5)} \ Let $(\mathcal{X}\times \mathcal{Y},\ \|\cdot\|_{\mathcal{X}}+
\|\cdot\|_{\mathcal{Y}})$  be an abstract  Banach product  space.
$B_1:\mathcal{X}\times\mathcal{X}\rightarrow\mathcal{X}$,  $B_2:
 \mathcal{X}\times\mathcal{Y}\rightarrow\mathcal{Y}$  and
$L:\mathcal{Y}\rightarrow\mathcal{X}$ are respectively two
bilinear operators and one  linear operator such that for any
$(x_i,y_i)\in \mathcal{X}\times\mathcal{Y}$ ($i=1, 2$), we have
\begin{align*}
 \|B_1(x_1,x_2)\|_{\mathcal{X}}\le {\lambda}\|x_1\|_{\mathcal{X}}\|x_2\|_{\mathcal{X}},\quad
  \|L(y_i)\|_{\mathcal{X}}\le{\eta}\|y_i\|_{\mathcal{Y}},\quad
 \|B_2(x_i,y_i)\|_{\mathcal{Y}}\le{\lambda}\|x_i\|_{\mathcal{X}}\|y_i\|_{\mathcal{Y}},
\end{align*}
 where  $\lambda,\eta>0$. For any
 $(x_0,y_0)\in\mathcal{X}\times\mathcal{Y}$ with
 $\|(x_0,c_\ast{y_0})\|_{\mathcal{X}\times\mathcal{Y}}<1/(16\lambda)$ $(c_\ast=\max\{2\eta,1\})$, the following system
\begin{equation*}\label{eq4.3-1}
     (x, y)=(x_0, y_0)
          +\Big(B_1(x,x),\ B_2(x,y)\Big) +\Big(L(y),\ 0\Big)
       \end{equation*}
 has a   solution $(x,y)$ in $\mathcal{X}\times\mathcal{Y}$. In particular,  the solution is such that
\begin{align*}
 \|(x,c_\ast{y})\|_{\mathcal{X}\times\mathcal{Y}}\le{4\|(x_0,c_\ast{y_0})\|_{\mathcal{X}\times\mathcal{Y}}}
\end{align*}
 and it is the only one such that
$\|(x,c_\ast{y})\|_{\mathcal{X}\times\mathcal{Y}}<{1}/{(4\lambda)}.$
\end{lemma}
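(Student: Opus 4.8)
The plan is to recast the system as a fixed-point equation for the map
\[
\Phi(x,y):=\bigl(x_0+B_1(x,x)+L(y),\ y_0+B_2(x,y)\bigr),
\]
and to apply the Banach contraction principle, but measuring distances in the \emph{weighted} norm $\|(x,y)\|_\ast:=\|x\|_{\mathcal X}+c_\ast\|y\|_{\mathcal Y}$ (which is precisely $\|(x,c_\ast y)\|_{\mathcal X\times\mathcal Y}$) rather than the unweighted product norm. Set $\delta:=\|(x_0,c_\ast y_0)\|_{\mathcal X\times\mathcal Y}=\|x_0\|_{\mathcal X}+c_\ast\|y_0\|_{\mathcal Y}$, so that $\delta<1/(16\lambda)$ by hypothesis, and run the argument on the closed set
\[
\mathcal B:=\bigl\{(x,y)\in\mathcal X\times\mathcal Y:\ \|x\|_{\mathcal X}\le 2\delta,\ \|y\|_{\mathcal Y}\le 2\|y_0\|_{\mathcal Y}\bigr\},
\]
which is complete for the metric induced by $\|\cdot\|_\ast$ since $\mathcal X\times\mathcal Y$ is Banach.

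\emph{Self-map and contraction.} For $(x,y)\in\mathcal B$, the three hypothesized bounds together with $2\eta\le c_\ast$ give
\[
\|x_0+B_1(x,x)+L(y)\|_{\mathcal X}\le\|x_0\|_{\mathcal X}+2\eta\|y_0\|_{\mathcal Y}+4\lambda\delta^2\le\delta+4\lambda\delta^2<2\delta,
\]
\[
\|y_0+B_2(x,y)\|_{\mathcal Y}\le\|y_0\|_{\mathcal Y}+\lambda(2\delta)(2\|y_0\|_{\mathcal Y})=\|y_0\|_{\mathcal Y}(1+4\lambda\delta)<2\|y_0\|_{\mathcal Y},
\]
where $\delta<1/(16\lambda)$ is used in both lines (it forces $4\lambda\delta^2<\delta$ and $4\lambda\delta<1/4$), so $\Phi(\mathcal B)\subset\mathcal B$. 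For $(x,y),(\tilde x,\tilde y)\in\mathcal B$, bilinearity yields $B_1(x,x)-B_1(\tilde x,\tilde x)=B_1(x-\tilde x,x)+B_1(\tilde x,x-\tilde x)$ and similarly for $B_2$, hence
\[
\|\Phi(x,y)-\Phi(\tilde x,\tilde y)\|_\ast\le\bigl(4\lambda\delta+2c_\ast\lambda\|y_0\|_{\mathcal Y}\bigr)\|x-\tilde x\|_{\mathcal X}+\bigl(\eta+2c_\ast\lambda\delta\bigr)\|y-\tilde y\|_{\mathcal Y}.
\]
Using $c_\ast\|y_0\|_{\mathcal Y}\le\delta$ the first coefficient is $\le 6\lambda\delta<3/8$, while dividing the second coefficient by $c_\ast$ and using $\eta/c_\ast\le 1/2$ gives $\eta/c_\ast+2\lambda\delta<5/8$; therefore $\|\Phi(x,y)-\Phi(\tilde x,\tilde y)\|_\ast\le\tfrac58\,\|(x,y)-(\tilde x,\tilde y)\|_\ast$. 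The contraction principle then furnishes a unique fixed point $(x,y)\in\mathcal B$ of $\Phi$, i.e.\ a solution of the system.

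\emph{A priori bound and full uniqueness.} Since $(x,y)\in\mathcal B$ one reads off $\|(x,c_\ast y)\|_{\mathcal X\times\mathcal Y}=\|x\|_{\mathcal X}+c_\ast\|y\|_{\mathcal Y}\le 2\delta+2c_\ast\|y_0\|_{\mathcal Y}\le 4\delta=4\|(x_0,c_\ast y_0)\|_{\mathcal X\times\mathcal Y}$, which is the claimed bound, and in particular $\|(x,c_\ast y)\|_{\mathcal X\times\mathcal Y}<1/(4\lambda)$. To obtain uniqueness in the whole ball $\{\|(x,c_\ast y)\|_{\mathcal X\times\mathcal Y}<1/(4\lambda)\}$ (which may be larger than $\mathcal B$), I would repeat the difference computation directly on two putative solutions $(x,y),(\tilde x,\tilde y)$ lying there: since $\lambda\|x\|_{\mathcal X}$, $\lambda\|\tilde x\|_{\mathcal X}$, $\lambda c_\ast\|y\|_{\mathcal Y}$ are each $<1/4$ and $\eta/c_\ast\le 1/2$, both coefficients in the analogue of the displayed inequality are now $<3/4$, forcing $\|x-\tilde x\|_{\mathcal X}+c_\ast\|y-\tilde y\|_{\mathcal Y}=0$.

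I expect the only step that is not purely mechanical to be the choice of the weight $c_\ast=\max\{2\eta,1\}$. The linear operator $L$ enters the $\mathcal X$-equation with a \emph{fixed} bound $\eta$ and carries no smallness, so in the unweighted norm its contribution to the Lipschitz constant of $\Phi$ in the $y$-slot need not be $<1$; rescaling the $\mathcal Y$-component by $c_\ast\ge 2\eta$ turns it into $\eta/c_\ast\le 1/2$, while $c_\ast\ge 1$ together with the smallness $\delta<1/(16\lambda)$ keeps the coupling term $B_2(x,y)$ strictly subcritical. Everything else is the standard contraction-mapping bookkeeping, and the same argument can be phrased equivalently as a Picard iteration $x^{(k+1)}=x_0+B_1(x^{(k)},x^{(k)})+L(y^{(k)})$, $y^{(k+1)}=y_0+B_2(x^{(k)},y^{(k)})$ whose iterates remain in $\mathcal B$ and whose consecutive differences decay geometrically with ratio $5/8$.
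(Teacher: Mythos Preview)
Your argument is correct: the weighted contraction with $c_\ast=\max\{2\eta,1\}$ is exactly the right device to absorb the linear term $L$, and your bookkeeping (self-map, Lipschitz constants, a priori bound, uniqueness in the larger ball) all checks out. Note, however, that the paper does not supply its own proof of this lemma---it is quoted verbatim from \cite{Cannone03} (p.~189, Lemma~5) and used as a black box---so there is no ``paper's proof'' to compare against; your write-up is essentially the standard proof one would expect in that reference.
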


For $n\ge2$, $p\in(\frac{n}{2},\infty)$ and $r\in[1,\infty]$, let
$\mathcal{X}_T$ and  $\mathcal{Z}_T$ respectively be the spaces
$$\mathcal{X}_T=\tilde{L}^2_T(B^{0}_{\infty,1})\cap\tilde{L}^2_T(B^{0,1}_{\infty,\infty})\quad \hbox{ and }\quad
\mathcal{Z}_T=\tilde{L}^2_T(B^{0}_{p,r})$$
with norms
 $$\|\vec{u}\|_{\mathcal{X}_T}:=\|\vec{u}\|_{\tilde{L}^2_T(B^{0}_{\infty,1})}+\|\vec{u}\|_{\tilde{L}^2_T(B^{0,1}_{\infty,\infty})}\quad\hbox{ and
  }\quad \|\theta\|_{\mathcal{Z}_T}:=\|\theta\|_{\tilde{L}^2_T(B^{0}_{p,r})}.$$
Let $\mathcal{Y}_T$ be the space
$$\mathcal{Y}_T=\tilde{L}^2_T(B^{0}_{\frac{n}{2},1})\cap\tilde{L}^2_T(B^{0,1}_{\frac{n}{2},\infty})$$
with norm
 $$\|\theta\|_{\mathcal{Y}_T}:=\|\theta\|_{\tilde{L}^2_T(B^{0}_{\frac{n}{2},1})}+\|\theta\|_{\tilde{L}^2_T(B^{0,1}_{\frac{n}{2},\infty})}.$$
Recall that
\begin{align}\label{eq4.3.0}
\left\{\begin{aligned}
 \mathfrak{J}_1(\vec{u},\theta)&=e^{t\Delta}\vec{u}_0-\int_{0}^te^{(t-s)\Delta}\mathbb{P}(\vec{u}\cdot\nabla)\vec{u}ds
        +\int_{0}^te^{(t-s)\Delta}\mathbb{P}(\theta{\vec{a}}){ds},\\
   \mathfrak{J}_2(\vec{u},\theta)&=e^{t\Delta}\theta_0-\int_{0}^te^{(t-s)\Delta}(\vec{u}\cdot\nabla{\theta})ds.
   \end{aligned}\right.
   \end{align}

In what follows, we prove several bilinear estimates.
\begin{lemma}\label{lem4.3.2}
 Let $T>0$, $n\ge2$ and
 $\vec{u}_0\in{B^{-1}_{\infty,1}}\cap{B}^{-1,1}_{\infty,\infty}$. We have the following two assertions:

\textsc{(1)}\ \  For
 $\theta\in\mathcal{Y}_T$ we have
\begin{align}
  \|\mathfrak{J}_1(\vec{u},\theta)\|_{\mathcal{X}_T}\lesssim
  &\ (1+T)\Big(\|\vec{u}_0\|_{B^{-1}_{\infty,1}{\cap}B^{-1,1}_{\infty,\infty}}+
  \|\vec{u}\|^2_{\mathcal{X}_T}+
  \|\theta\|_{\mathcal{Y}_T}\Big).\label{eq4.3.1}
\end{align}

\textsc{(2)}\ \ For $\theta\in\mathcal{Z}_T$, $r\in[1,\infty]$ and
$p\in(\frac{n}{2},\infty)$ we have
\begin{align}
  \|\mathfrak{J}_1(\vec{u},\theta)\|_{\mathcal{X}_T}\lesssim
  &\ (1+T)\Big(\|\vec{u}_0\|_{B^{-1}_{\infty,1}{\cap}B^{-1,1}_{\infty,\infty}}+
  \|\vec{u}\|^2_{\mathcal{X}_T}+
  \|\theta\|_{\mathcal{Z}_T}\Big).\label{eq4.3.2}
\end{align}

\end{lemma}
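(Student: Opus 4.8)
\textbf{Proof strategy for Lemma \ref{lem4.3.2}.}
The plan is to estimate each of the three terms in $\mathfrak{J}_1(\vec u,\theta)$ separately, relying on the heat-semigroup smoothing (raising the regularity index by one, from $-1$ to $0$) combined with the product estimates of Lemmas \ref{lem4.2.6} and \ref{lem4.2.7}. First I would handle the linear term $e^{t\Delta}\vec u_0$: applying $\triangle_q$ and using Bernstein's lemma \ref{lem4.2.1} together with the standard decay $\|e^{t\Delta}\triangle_q f\|_p\lesssim e^{-ct2^{2q}}\|\triangle_q f\|_p$, one computes the $L^2_T$-norm in time of each dyadic piece and gains the factor $2^{-q}$ needed to pass from $B^{-1}$ to $B^0$; summing (resp. taking sup) in $q$ against the $(3+q)^{\alpha}$ weights yields
\begin{align*}
\|e^{t\Delta}\vec u_0\|_{\tilde L^2_T(B^0_{\infty,1})\cap \tilde L^2_T(B^{0,1}_{\infty,\infty})}\lesssim \|\vec u_0\|_{B^{-1}_{\infty,1}\cap B^{-1,1}_{\infty,\infty}}.
\end{align*}
Here the finiteness of $T$ is harmless, and in fact produces the benign $(1+T)$ prefactor once one does not exploit the full time decay.

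Next I would treat the two Duhamel terms. For the bilinear term $\int_0^t e^{(t-s)\Delta}\mathbb P(\vec u\cdot\nabla)\vec u\,ds$, the key point is that $\mathbb P$ is a zeroth-order Fourier multiplier bounded on every $L^p$ and commuting (up to harmless kernel bounds) with $\triangle_q$, while $\nabla$ contributes a factor $2^q$ on the $q$-th block; the operator $\int_0^t e^{(t-s)\Delta}\diva(\cdot)\,ds$ applied to a function frequency-localized at $2^q$ therefore maps $L^1_T L^p_x$-type data into $\tilde L^2_T(B^0_{p,r})$-type data with a uniform bound (this is the standard maximal-regularity / Chemin--Lerner estimate for the heat kernel; cf.\ \cite{Danchin05}). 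Writing $(\vec u\cdot\nabla)\vec u=\diva(\vec u\otimes\vec u)$ by the divergence-free condition and invoking Lemma \ref{lem4.2.7} with $p_1=p_2=\infty$, $\rho_1=\rho_2=2$, so $\rho=1$, one bounds $\|\vec u\otimes\vec u\|_{\tilde L^1_T(B^0_{\infty,1})\cap\tilde L^1_T(B^{0,1}_{\infty,\infty})}\lesssim \|\vec u\|_{\mathcal X_T}^2$, and the semigroup step absorbs the extra $2^q$ from the divergence. This gives the $\|\vec u\|_{\mathcal X_T}^2$ contribution. For the forcing term $\int_0^t e^{(t-s)\Delta}\mathbb P(\theta\vec a)\,ds$ there is no derivative, so one only needs the semigroup to gain one full derivative of smoothing: $\int_0^t e^{(t-s)\Delta}(\cdot)\,ds$ maps $\tilde L^1_T(B^{-1}_{p,r})\to \tilde L^2_T(B^0_{p,r})$ on each block, and a crude bound $\|\theta\vec a\|_{\tilde L^1_T(B^{-1}_{p,r})}\le T^{1/2}|\vec a|\,\|\theta\|_{\tilde L^2_T(B^{-1}_{p,r})}\lesssim T\,\|\theta\|_{\mathcal Z_T}$ (using the embedding $B^0\hookrightarrow B^{-1}$ and Hölder in time) closes assertion (2). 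For assertion (1) the argument is identical except that $\theta\in\mathcal Y_T=\tilde L^2_T(B^0_{n/2,1})\cap\tilde L^2_T(B^{0,1}_{n/2,\infty})$, and one must carry the $B^{0,1}_{n/2,\infty}$ piece through the same semigroup estimate; since the $(3+q)^\alpha$ weights pass unchanged through Bernstein and the heat kernel, no new difficulty arises.

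The main obstacle is bookkeeping rather than conceptual: one must make sure that every step is performed at the level of Chemin--Lerner spaces $\tilde L^\rho_T(B^{s,\alpha}_{p,r})$ and \emph{not} $L^\rho_T(B^{s,\alpha}_{p,r})$, so that the time integration sits inside the $\ell^r_q$ (or $\ell^\infty_q$) summation; this is exactly what makes the product estimates of Lemmas \ref{lem4.2.6}--\ref{lem4.2.7} applicable with the Hölder relation $1/\rho=1/\rho_1+1/\rho_2$. The one genuinely delicate quantitative point is the heat-semigroup smoothing estimate on a single dyadic block, namely that for $f$ with $\supp\hat f\subset C(0,R_12^q,R_22^q)$ one has
\begin{align*}
\Big\|\int_0^t e^{(t-s)\Delta}\nabla f(s)\,ds\Big\|_{L^2(0,T;L^p_x)}\lesssim 2^{-q}\,\|2^{q}f\|_{L^1(0,T;L^p_x)}=\|f\|_{L^1(0,T;L^p_x)},
\end{align*}
uniformly in $q$ and $T$; this follows from Young's inequality in $t$ applied to the kernel $2^q e^{-c(t-s)2^{2q}}$ whose $L^1_t$-norm is $O(2^{-q})$, together with Bernstein to turn $\nabla$ into the factor $2^q$. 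Once this building block and its derivative-free analogue are in hand, assembling \eqref{eq4.3.1} and \eqref{eq4.3.2} is routine, and the $(1+T)$ factor records the two places (the forcing term, and the absence of time-decay exploitation in the linear term) where a positive power of $T$ or a constant appears.
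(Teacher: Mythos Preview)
Your treatment of the forcing term $\int_0^t e^{(t-s)\Delta}\mathbb{P}(\theta\vec a)\,ds$ has a genuine gap. You send it into $\tilde L^2_T(B^0_{p,r})$, but $\mathfrak{J}_1$ must land in $\mathcal{X}_T=\tilde L^2_T(B^0_{\infty,1})\cap\tilde L^2_T(B^{0,1}_{\infty,\infty})$, i.e.\ in an $L^\infty_x$-based space; there is no embedding $\tilde L^2_T(B^0_{p,r})\hookrightarrow\mathcal{X}_T$, so the argument does not close. The paper instead applies Bernstein on each block to lift $L^p\to L^\infty$ at cost $2^{qn/p}$, and exploits the full $L^1_t$-norm of the heat kernel (of order $2^{-2q}$, since here both input and output are $L^2$ in time, so Young reads $L^1\ast L^2\to L^2$) to obtain a net factor $2^{-q(2-n/p)}$. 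The summability $\sum_{q\ge0}2^{-q(2-n/p)}<\infty$ and the bound $\sup_{q}(q+3)2^{-q(2-n/p)}<\infty$ are exactly where the hypothesis $p>n/2$ enters; your argument never invokes $p>n/2$ for the forcing term, which signals the omission. The same mechanism governs assertion~(1): Bernstein from $L^{n/2}$ to $L^\infty$ costs $2^{2q}$ and is \emph{exactly} compensated by the kernel's $2^{-2q}$, which is why $\mathcal{Y}_T$ must carry both the $\ell^1$-piece $\tilde L^2_T(B^0_{n/2,1})$ (for the $B^0_{\infty,1}$ target) and the weighted-$\ell^\infty$ piece $\tilde L^2_T(B^{0,1}_{n/2,\infty})$ (for the $B^{0,1}_{\infty,\infty}$ target).

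A second issue: your assertion that $\mathbb{P}$ is bounded on every $L^p$ fails at $p=\infty$, since Riesz transforms are unbounded there. For $q\ge0$ this is harmless because $\triangle_q\mathbb{P}$ has smooth symbol supported in an annulus and is $L^\infty$-bounded, but the low-frequency block $q=-1$ needs separate handling. The paper treats it via Oseen-kernel decay (estimating $e^{(t-s)\Delta}\mathbb{P}$ as a single convolution operator) together with an $L^1$-boundedness argument for $S_0\mathbb{P}\nabla$ taken from \cite{Sawada04}; this low-frequency step is also the source of the positive powers of $T$ that assemble into the $(1+T)$ prefactor.
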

\begin{proof}
 We divide the proof of the $\mathfrak{J}_1(\vec{u},\theta)$ into two subcases $q\ge0$ and
$q=-1$. Since when $q\ge0$, the symbol of ${\triangle}_{q}$ is
supported in dyadic shells and the symbol of $\mathbb{P}$ is smooth
in the corresponding dyadic shells we have
\begin{align}\label{eq4.3.3}
 \triangle_q\mathfrak{J}_1(\vec{u},\theta)=e^{t\Delta}\triangle_q\vec{u}_0-
 \int_0^te^{(t-\tau)\Delta}{\triangle}_q\left(\mathbb{P}\nabla\cdot(\vec{u}\otimes{\vec{u}})(\tau)-\mathbb{P}(\theta{\vec{a}})
 \right)d\tau.
\end{align}
In \eqref{eq4.3.3} we have $n$ scalar equations and each of the $n$
components shares the same estimate. By making use of
\eqref{eq4.2.2} twice we obtain
\begin{align}\nonumber
 \|\triangle_q\mathfrak{J}_1(\vec{u},\theta)\|_\infty&\lesssim{e^{-\kappa2^{2q}t}}{\|\triangle_q{\vec{u}_0}\|_\infty}
 +\int_0^te^{-\kappa2^{2q}(t-\tau)}\Big(2^q\|\triangle_q(\vec{u}\otimes\vec{u})\|_\infty+ \|\triangle_q\theta\|_{\infty}\Big)d\tau\\
 \nonumber
 &\lesssim{e^{-\kappa2^{2q}t}}{\|\triangle_q{\vec{u}_0}\|_\infty}+\int_0^te^{-\kappa2^{2q}(t-\tau)}2^q\|\triangle_q(\vec{u}\otimes\vec{u})\|_\infty
 d\tau \\ & \ \ \ \
 +\int_0^te^{-\kappa2^{2q}(t-\tau)}\min\{2^{2q}\|\triangle_q\theta\|_{\frac{n}{2}}, \ 2^{\frac{qn}{p}}\|\triangle_q\theta\|_{p}\}d\tau\nonumber.
\end{align}
Applying convolution inequalities to  the above estimate with
respect to time variable  we get
\begin{align}\nonumber
 \|\triangle_q\mathfrak{J}_1(\vec{u},\theta)\|_{L^2_TL^\infty_x}\lesssim&\left(\frac{1-e^{-2\kappa{T}2^{2q}}}{2\kappa}\right)^{\half}
 \left(2^{-q}\|\triangle_q\vec{u}_0\|_\infty +\|\triangle_q(\vec{u}\otimes\vec{u})\|_{L^1_TL^\infty_x}\right)\\&+
  \min\Big\{ \frac{1-e^{-2\kappa{T}2^{2q}}}{2\kappa}
 \|\triangle_q\theta\|_{L^2_TL^{\frac{n}{2}}_x},\ \ \frac{1-e^{-2\kappa{T}2^{2q}}}{2\kappa2^{q(2-\frac{n}{p})}}
 \|\triangle_q\theta\|_{L^2_TL^{p}_x})\Big\}\label{eq4.3.4}.
\end{align}
Considering
$$\sum_{q\ge0}2^{-q(2-\frac{n}{p})}<\infty \ \ \hbox{ for } p\in(\frac{n}{2},\infty)\cap[1,\infty),$$
 from \eqref{eq4.3.4} and Definition \ref{def4.1.1} we see that
\begin{align}\label{eq4.3.5}
\sum_{q\ge0}\|\triangle_q\mathfrak{J}_1(\vec{u},\theta)\|_{L^2_TL^\infty_x}&\lesssim\sum_{q\ge-1}\Big({2^{-q}}{\|\triangle_q\vec{u}_0\|_\infty}
 +\|\triangle_q(\vec{u}\otimes\vec{u})\|_{L^1_TL^\infty_x}\nonumber\\ \nonumber
 &\hskip1cm+\min\{\|\triangle_q\theta\|_{L^2_TL^{\frac{n}{2}}_x},\ \ {2^{-q(\frac{n}{p}-2)}}\|\triangle_q\theta\|_{L^2_TL^{p}_x}\}\Big)\nonumber\\
 & \lesssim\|\vec{u}_0\|_{B^{-1}_{\infty,1}}\!+\!\|\vec{u}\otimes\vec{u}\|_{\tilde{L}^1_T(B^{0}_{\infty,1})}
 \!+\!\min\{\|\theta\|_{\tilde{L}^2_T(B^{0}_{\frac{n}{2},2})},\ \|\theta\|_{\tilde{L}^2_T(B^{0}_{p,\infty})}\}.\end{align}
Considering
 $$\sup_{q\ge-1} \frac{q+3}{2^{q(2-\frac{n}{p})}}\le C(p,n)<\infty\quad \hbox{ for }
 p>\frac{n}{2},$$
 from \eqref{eq4.3.4}, Definition \ref{def4.1.1} and a similar argument as before we see that
\begin{align}
\sup_{q\ge0}(q+3)\|\triangle_q\mathfrak{J}_1(\vec{u},\theta)\|_{L^2_TL^\infty_x}
 \lesssim\|\vec{u}_0\|_{B^{-1,1}_{\infty,\infty}}&+\|\vec{u}\otimes\vec{u}\|_{\tilde{L}^1_T(B^{0,1}_{\infty,\infty})}
 \nonumber\\&\label{eq4.3.6} +\min\{\|\theta\|_{\tilde{L}^2_T(B^{0,1}_{\frac{n}{2},\infty})},\ \|\theta\|_{\tilde{L}^2_T(B^{0}_{p,\infty})}\}.\end{align}
Next we consider the case $q=-1$. We recall the decay estimates of
Oseen kernel (cf., Chapter 11,
 \cite{Le02}), by interpolating we observe that $e^\Delta\mathbb{P}(-\Delta)^{-\half+\delta}\nabla$ (for any
 $\delta\in(0,\half)$) is $L^1_x$ bounded. Similar to
 \eqref{eq4.3.3} we get
\begin{align*}
  S_0\mathfrak{J}_1(\vec{u},\theta)= & e^{t\Delta}S_0\vec{u}_0-\int_0^te^{(t-\tau)\Delta}S_0[\mathbb{P}\nabla\cdot(\vec{u}\otimes\vec{u})(\tau)]d\tau
 +\int_0^te^{(t-\tau)\Delta}S_0\mathbb{P}(\theta\vec{a})(\tau)d\tau.
\end{align*}
Applying decay estimates of heat  kernel and Lemma \ref{lem4.2.1} of
$\supp\mathcal{F}{S_0}\subset B(0,\frac{4}{3})$ we see that
\begin{align*}
 \|S_0\mathfrak{J}_1(\vec{u},\theta)\|_\infty&\lesssim
 \|e^{t\Delta}S_0\vec{u}_0\|_\infty+\int_0^t(t-\tau)^{-\frac{n}{4p}}\|\mathbb{P}S_0\theta\|_{2p}d\tau\\&\ \ \ +
 \int_0^t\|e^{(t-\tau)\Delta}{S}_{0}\mathbb{P}\nabla
 \cdot(\vec{u}\otimes\vec{u})(\tau)\|_\infty
 d\tau\\&\lesssim
 \|S_0\vec{u}_0\|_\infty+\int_0^t(t-\tau)^{-\frac{n}{4p}}\min\{\|S_0\theta\|_{p},\
 \
  \|S_0\theta\|_{\frac{n}{2}}\}d\tau\\&\
 \ \
 +\int_0^t(\|S_0(\vec{u}\otimes\vec{u})\|_\infty d\tau.
\end{align*}
In the above estimate we have used the following fact (see (5.29) of
\cite{Sawada04}):
\begin{align*}
\|S_0\mathbb{P}\nabla\cdot(\vec{u}\otimes\vec{u})\|_{\infty}\lesssim\|\mathbb{P}\nabla\tilde{h}\|_1\|S_0(\vec{u}\otimes\vec{u})
\|_{\infty}\lesssim\|\nabla\tilde{h}\|_{\dot{B}^{0}_{1,1}}\|S_0(\vec{u}\otimes\vec{u})
\|_{\infty}\lesssim\|S_0(\vec{u}\otimes\vec{u}) \|_{\infty}.
\end{align*}
Applying convolution inequalities to time variable we obtain that
\begin{align}\nonumber
\|S_0\mathfrak{J}_1(\vec{u},\theta)\|_{L^2_TL^\infty_x}&\!\lesssim\!
 T^{\half}\|S_0\vec{u}_0\|_\infty\!
  +T^{\frac{1}{2}}\|S_0(\vec{u}\otimes\vec{u})\|_{L^1_TL^\infty_x}+\!T^{\frac{4p-n}{4p}}\min\{
 \|S_0\theta\|_{L^2_TL^{\!\frac{n}{2}}_x},  \|S_0\theta\|_{L^2_TL^{p}_x}\}\\&\label{eq4.3.7}
  \lesssim\! C_T\Big[
  \|\vec{u}_0\|_{B^{-1}_{\infty,\infty}}\!\!\!\!+\!\!\|\vec{u}\otimes\vec{u}\|_{\tilde{L}^1_T(B^0_{\infty,\infty}\!)}\!+\!
  \min\{\|\theta\|_{\tilde{L}^2_T(B^0_{\frac{n}{2}},\infty\!)},
    \|\theta\|_{\tilde{L}^2_T(B^0_{p,\infty}\!)}\}\!
  \Big]\\&
  \lesssim\! C_T\Big[
  \|\vec{u}_0\|_{B^{-1,1}_{\infty,\infty}}\!\!\!\!+\!\!\|\vec{u}\otimes\vec{u}\|_{\tilde{L}^1_T(B^{0,1}_{\infty,\infty}\!)}\!+\!
  \min\{\|\theta\|_{\tilde{L}^2_T(B^{0,1}_{\frac{n}{2}},\infty\!)},
    \|\theta\|_{\tilde{L}^2_T(B^0_{p,\infty}\!)}\}\!
  \Big],\label{eq4.3.8}
\end{align}
where $C_T=(T^{\frac{4p-n}{4p}}+ T^{\frac{1}{2}})$.

By applying  \eqref{eq4.3.5} $\sim$ \eqref{eq4.3.8} and Definition
\ref{def4.1.1} as well as Lemmas \ref{lem4.2.6} $\sim$
\ref{lem4.2.7} we prove \eqref{eq4.3.1} and \eqref{eq4.3.2} and we
complete the proof of
 Lemma \ref{lem4.3.2}.
\end{proof}

\vskip.07cm
\begin{lemma}\label{lem4.3.3}
 Let $T>0$, $n\ge2$
 and $\vec{u}\in\mathcal{X}_T$. We have the following assertions:

 \textsc{(1)}\ \  For
 $\theta_0\in{B^{-1}_{\frac{n}{2},1}}\cap{B}^{-1,1}_{\frac{n}{2},\infty}$
  we have
\begin{align}
  \|\mathfrak{J}_2(\vec{u},\theta)\|_{\mathcal{Y}_T}\lesssim&\
    (1+T )\Big(\|\theta_0\|_{B^{-1}_{\frac{n}{2},1}{\cap}B^{-1,1}_{\frac{n}{2},\infty}}+
  \|\vec{u}\|_{\mathcal{X}_T}\|\theta\|_{\mathcal{Y}_T}\Big).\label{eq4.3.9}
\end{align}

 \textsc{(2)}\ \ For $\theta_0\in{B^{-1}_{p,r}}$, $r\in[1,\infty]$ and
 $p\in(\frac{n}{2},\infty)$ we have
\begin{align}
  \|\mathfrak{J}_2(\vec{u},\theta)\|_{\mathcal{Z}_T}\lesssim&\
    (1+T )\Big(\|\theta_0\|_{B^{-1}_{p,r}}+
  \|\vec{u}\|_{\mathcal{X}_T}\|\theta\|_{\mathcal{Z}_T}\Big).\label{eq4.3.10}
\end{align}
\end{lemma}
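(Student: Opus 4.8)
The plan is to estimate $\mathfrak{J}_2(\vec u,\theta)=e^{t\Delta}\theta_0-\int_0^t e^{(t-\tau)\Delta}(\vec u\cdot\nabla\theta)\,d\tau$ by the same frequency-localization scheme used in the proof of Lemma \ref{lem4.3.2}. First, since $\diva\vec u=0$, I would rewrite the nonlinear term in divergence form, $\vec u\cdot\nabla\theta=\diva(\theta\vec u)$, so that after applying $\triangle_q$ and using \eqref{eq4.2.2} the derivative produces a factor $2^q$:
\begin{align*}
\|\triangle_q\mathfrak{J}_2(\vec u,\theta)\|_{p}\lesssim e^{-\kappa 2^{2q}t}\|\triangle_q\theta_0\|_{p}
+\int_0^t e^{-\kappa 2^{2q}(t-\tau)}2^q\|\triangle_q(\theta\vec u)\|_{p}\,d\tau,
\end{align*}
with $p$ replaced by $\frac n2$ in case (1). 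Applying Young's inequality in the time variable (exactly as in the passage from \eqref{eq4.3.3} to \eqref{eq4.3.4}) gives
\begin{align*}
\|\triangle_q\mathfrak{J}_2(\vec u,\theta)\|_{L^2_TL^p_x}\lesssim
\Big(\tfrac{1-e^{-2\kappa T 2^{2q}}}{2\kappa}\Big)^{1/2}\big(2^{-q}\|\triangle_q\theta_0\|_{p}+\|\triangle_q(\theta\vec u)\|_{L^1_TL^p_x}\big),
\end{align*}
and for the low-frequency block $q=-1$ one argues as in \eqref{eq4.3.7}, using the boundedness of $e^{t\Delta}S_0\nabla\cdot$ on $L^p_x$ and the factor $T^{1/2}$ from integrating in time.

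Next I would sum (or take the supremum of) these bounds against the appropriate weights $2^{0\cdot qr}(3+q)^{\alpha r}$. Summing over $q\ge-1$ in $\ell^1$ with weight $(3+q)^0$ gives the $\tilde L^2_T(B^0_{n/2,1})$ (resp. $\tilde L^2_T(B^0_{p,r})$) norm; taking $\sup_q(3+q)$ gives the $\tilde L^2_T(B^{0,1}_{n/2,\infty})$ norm. On the data side this produces $\|\theta_0\|_{B^{-1}_{n/2,1}\cap B^{-1,1}_{n/2,\infty}}$ in case (1) and $\|\theta_0\|_{B^{-1}_{p,r}}$ in case (2) (here the shift of one derivative converts the $B^0$-type target norms into $B^{-1}$-type data norms). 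The factor $(1+T)$ absorbs both the $T^{1/2}$ from the $q=-1$ term and the $O(1)$ bound on $\big(\tfrac{1-e^{-2\kappa T 2^{2q}}}{2\kappa}\big)^{1/2}$ for $q\ge0$.

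For the quadratic term I would invoke the product estimates already proved: in case (1), $\frac1{n/2}=\frac1\infty+\frac1{n/2}$, so Lemma \ref{lem4.2.6} (with $\rho_1=\infty$, $\rho_2=2$, or more precisely Lemma \ref{lem4.2.7} together with Lemma \ref{lem4.2.6}) yields
\begin{align*}
\|\theta\vec u\|_{\tilde L^1_T(B^0_{n/2,1})\cap\tilde L^1_T(B^{0,1}_{n/2,\infty})}\lesssim \|\vec u\|_{\mathcal X_T}\|\theta\|_{\mathcal Y_T};
\end{align*}
in case (2), $\frac1p=\frac1\infty+\frac1p$, so Lemma \ref{lem4.2.6} gives $\|\theta\vec u\|_{\tilde L^1_T(B^0_{p,r})}\lesssim\|\vec u\|_{\mathcal X_T}\|\theta\|_{\mathcal Z_T}$. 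Combining everything yields \eqref{eq4.3.9} and \eqref{eq4.3.10}.

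The main obstacle is purely bookkeeping: making sure that the logarithmically weighted $\sup_q(3+q)$ estimate closes, i.e. that when we put the weight $(3+q)$ on $\|\triangle_q(\theta\vec u)\|_{L^1_TL^p_x}$ the product lands in $\tilde L^1_T(B^{0,1}_{p,\infty})$ and is controlled by $\mathcal X_T\times\mathcal Y_T$ without losing a power of $\log$ — this is exactly what Lemma \ref{lem4.2.7} (respectively the $B^{0,1}_{p,\infty}$ half of the estimate, which needs the $B^0_{\infty,1}$-regularity of $\vec u$, not merely $B^{0,1}_{\infty,\infty}$) is designed to supply, so the only care needed is to cite it with the correct exponents. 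The rest is a verbatim repetition of the computation in Lemma \ref{lem4.3.2}, with $\mathfrak{J}_1$ replaced by $\mathfrak{J}_2$ and the self-interaction $\vec u\otimes\vec u$ replaced by $\theta\vec u$; I would therefore present it compactly and refer back to that proof for the details of the heat-kernel and convolution estimates.
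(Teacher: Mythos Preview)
Your proposal is correct and follows essentially the same approach as the paper: split into high frequencies $q\ge0$ (Bernstein plus heat-kernel decay plus Young in time) and the low-frequency block $q=-1$, then sum/sup with the appropriate weights and close with the product Lemmas \ref{lem4.2.6}--\ref{lem4.2.7}. The only slip is the parenthetical ``$\rho_1=\infty$, $\rho_2=2$'': since both $\vec u\in\mathcal X_T$ and $\theta\in\mathcal Y_T$ (or $\mathcal Z_T$) carry $\tilde L^2_T$ integrability, the correct choice is $\rho_1=\rho_2=2$ so that the product lands in $\tilde L^1_T$.
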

\begin{proof}
 Similar as before, we  divide the
proof of the $\mathfrak{J}_2(\vec{u},\theta)$ into two subcases
$q\ge0$ and $q=-1$. In the case $q\ge0$ we have
\begin{align*}
\triangle_q\mathfrak{J}_2(\vec{u},\theta)=e^{t\Delta}\triangle_q\theta_0-
\int_0^te^{(t-\tau)\Delta}{\triangle}_q\nabla\cdot(\vec{u}\theta)(\tau)d\tau.
\end{align*}
Applying Lemma \ref{lem4.2.1}, using  convolution inequalities to
time variable and following a similar argument as before we see that
\begin{align}\label{eq4.3.11}
 \|\triangle_q\mathfrak{J}_2(\vec{u},\theta)\|_{L^2_TL^{\frac{n}{2}}_x}\lesssim&
 2^{-q}\|\triangle_q\theta_0\|_{\frac{n}{2}}+\|\triangle_q(\vec{u}\theta)\|_{L^1_TL^{\frac{n}{2}}_x},
\end{align}
which  yields
\begin{align}\nonumber
\sum_{q\ge0}\|\triangle_q\mathfrak{J}_2(\vec{u},\theta)\|_{L^2_TL^{\frac{n}{2}}_x}&\lesssim\sum_{q\ge-1}(2^{-q}\|\triangle_q\theta_0\|_{\frac{n}{2}}
 +\|\triangle_q(\vec{u}\theta)\|_{L^1_TL^{\frac{n}{2}}_x})\\
 \label{eq4.3.12}&\lesssim \|\theta_0\|_{B^{-1}_{\frac{n}{2},1}}
 +\|\vec{u}\theta\|_{\tilde{L}^1_T(B^0_{\frac{n}{2},1})}
\end{align}
and
\begin{align}\label{eq4.3.13}
\sup_{q\ge0}(q+3)\|\triangle_q\mathfrak{J}_2(\vec{u},\theta)\|_{L^2_TL^{\frac{n}{2}}_x}\lesssim\|\theta_0\|_{B^{-1,1}_{\frac{n}{2},\infty}}
 +\|\vec{u}\theta\|_{\tilde{L}^1_T(B^{0,1}_{\frac{n}{2},\infty})},
\end{align}
where we have used Definition \ref{def4.1.1}. Now we consider the
case $q=-1$.  Similarly, we have
\begin{align*}
S_0\mathfrak{J}_2(\vec{u},\theta)=&e^{t\Delta}S_0\theta_0-\int_0^te^{(t-\tau)\Delta}S_0\nabla\cdot(\vec{u}\theta)(\tau)d\tau.
\end{align*}
Applying Lemma \ref{lem4.2.1} and convolution inequality to time
variable we obtain
\begin{align*}
 \|S_0\mathfrak{J}_2(\vec{u},\theta)\|_{L^2_TL^{\frac{n}{2}}_x}\lesssim
 T^{\half}\|S_0\theta_0\|_{\frac{n}{2}}
 +T^{\frac{1}{2}}\|S_0(\vec{u}\theta)\|_{L^1_TL^{\frac{n}{2}}_x}
\end{align*}
which yields
\begin{align}\label{eq4.3.14}
 \|S_0\mathfrak{J}_2(\vec{u},\theta)\|_{L^2_TL^{\frac{n}{2}}_x}&\lesssim
 T^{\frac{1}{2}}\big(\|\theta_0\|_{B^{-1}_{\frac{n}{2},\infty}}+\|\vec{u}\theta\|_{\tilde{L}^1_T(B^{0}_{\frac{n}{2},\infty})}\big)\\
 &\lesssim
 T^{\frac{1}{2}}\big(\|\theta_0\|_{B^{-1,1}_{\frac{n}{2},\infty}}+\|\vec{u}\theta\|_{\tilde{L}^1_T(B^{0,1}_{\frac{n}{2},\infty})}\big).
 \label{eq4.3.15}
\end{align}

The desired results \eqref{eq4.3.9} and \eqref{eq4.3.10} follows
from \eqref{eq4.3.12} $\sim$ \eqref{eq4.3.15} and Definition
\ref{def4.1.1} as well as Lemmas \ref{lem4.2.6} and \ref{lem4.2.7}.
This proves Lemma \ref{lem4.3.3}.
\end{proof}

\textbf{Proofs of Theorems \ref{thm4.1.2} and \ref{thm4.1.3}:} From
Lemmas \ref{lem4.3.2}, \ref{lem4.3.3} and \ref{lem4.3.1} as well as
a standard argument, we see that Theorems \ref{thm4.1.2} and
\ref{thm4.1.3} follow.

\section{Proof of Theorem \ref{thm4.1.4}}

In this section, we give the proof of Theorem \ref{thm4.1.4}. We
first prove the following heat semigroup characterization of the
space $B^{s,\sigma}_{p,r}$:

\begin{lemma}\label{lem4.4.1}
Let $p,r\in [1,\infty]$, $s<0$ and $\sigma\ge0$. The following
assertions are equivalent:

\textsc{(1)}\ \ $f\in B^{s,\sigma}_{p, r}$,.

\textsc{(2)}\ \ For all $t\in (0,1)$, $e^{t\Delta}f\in L^p_x$ and
$t^{\frac{1}{2}}|\ln(\frac{t}{e^2})|^\sigma\|e^{t\Delta}f\|_p\in
L^r((0,1),\frac{dt}{t})$.
\end{lemma}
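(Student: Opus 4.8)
To prove the heat semigroup characterization of $B^{s,\sigma}_{p,r}$, I would establish the two implications $(1)\Rightarrow(2)$ and $(2)\Rightarrow(1)$ separately, working with the dyadic blocks $\triangle_q f$ throughout. The key device is the elementary decay/smoothing estimates for the heat kernel acting on frequency-localized functions: for $q\ge 0$, Lemma \ref{lem4.2.1} together with the spectral support of $\triangle_q f$ gives $\|e^{t\Delta}\triangle_q f\|_p \lesssim e^{-c t 2^{2q}}\|\triangle_q f\|_p$, while for the low-frequency part $\|e^{t\Delta}S_0 f\|_p \lesssim \|S_0 f\|_p$ with no decay. These are exactly the ingredients already used in Section 3.

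\textbf{Proof of $(1)\Rightarrow(2)$.} Assume $f\in B^{s,\sigma}_{p,r}$. Write $e^{t\Delta}f = \sum_{q\ge-1}e^{t\Delta}\triangle_q f$ and estimate in $L^p_x$ using the bounds above:
\begin{align*}
\|e^{t\Delta}f\|_p \lesssim \|S_0 f\|_p + \sum_{q\ge 0} e^{-c t 2^{2q}}\|\triangle_q f\|_p.
\end{align*}
I then multiply by $t^{1/2}|\ln(t/e^2)|^\sigma$ and take the $L^r((0,1),dt/t)$ norm. The point is to match, for each dyadic frequency $2^q$, the weight $t^{1/2}e^{-ct2^{2q}}$ against the scale: performing the change of variables $t\mapsto 2^{-2q}t$ (or simply noting $\int_0^1 t^{r/2}e^{-crt2^{2q}}\,\frac{dt}{t}\sim 2^{-qr}$ up to the logarithmic factor, which near $t\sim 2^{-2q}$ contributes a factor comparable to $(q+3)^\sigma$) converts the heat-kernel decay factor $e^{-ct2^{2q}}$ into the algebraic gain $2^{-q}$ and the factor $|\ln(t/e^2)|^\sigma$ into $(3+q)^\sigma$. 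Summing via the triangle inequality in $\ell^r$ (treating $r=1,\infty$ and interpolating, or directly using Minkowski), one recovers $\big(\sum_q 2^{qrs}(3+q)^{\sigma r}\|\triangle_q f\|_p^r\big)^{1/r} = \|f\|_{B^{s,\sigma}_{p,r}}$; the hypothesis $s<0$ is what makes the low-frequency term and the summation converge.

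\textbf{Proof of $(2)\Rightarrow(1)$.} Conversely, I recover $\triangle_q f$ from $e^{t\Delta}f$ by writing $\triangle_q f = \varphi(2^{-q}D)e^{-t\Delta}e^{t\Delta}f = \psi_q(t,D)\,e^{t\Delta}f$, where the multiplier $\psi_q(t,\xi)=\varphi(2^{-q}\xi)e^{t|\xi|^2}$ is, on the support of $\varphi(2^{-q}\cdot)$, smooth with $L^1$-bounded inverse Fourier transform of norm $\lesssim e^{Ct2^{2q}}$; hence $\|\triangle_q f\|_p \lesssim e^{Ct2^{2q}}\|e^{t\Delta}f\|_p$ for every $t$. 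Evaluating at the natural scale $t = 2^{-2q}$ (legitimate since $2^{-2q}\in(0,1)$ for $q\ge 0$) gives $\|\triangle_q f\|_p \lesssim \|e^{2^{-2q}\Delta}f\|_p$, and then $2^{qs}(3+q)^\sigma\|\triangle_q f\|_p \lesssim 2^{qs}(3+q)^\sigma\|e^{2^{-2q}\Delta}f\|_p \sim t^{-s/2}|\ln(t/e^2)|^\sigma\|e^{t\Delta}f\|_p\big|_{t=2^{-2q}}$. To pass from the discrete values $t=2^{-2q}$ to the $L^r(dt/t)$ norm I would insert the standard observation that $t\mapsto \|e^{t\Delta}f\|_p$ is essentially monotone on each dyadic time interval $[2^{-2q-2},2^{-2q}]$ (again using the heat semigroup bounds to compare $e^{t\Delta}f$ with $e^{2^{-2q}\Delta}f$ for $t$ in that range), so that the sequence $\{2^{-qrs}(3+q)^{\sigma r}\|\triangle_q f\|_p^r\}$ is dominated by $\int_0^1 t^{-rs/2}|\ln(t/e^2)|^{\sigma r}\|e^{t\Delta}f\|_p^r\,\frac{dt}{t}$; the case $r=\infty$ is the obvious sup version. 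For $q=-1$ one simply notes $\|S_0 f\|_p = \|e^{-\Delta}e^{\Delta}S_0 f\|_p \lesssim \|e^{\Delta}f\|_p \lesssim \sup_{t\in(0,1)}\|e^{t\Delta}f\|_p$ plus a controlled correction, which is subsumed in the $r$-norm since $s<0$.

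\textbf{Main obstacle.} The routine heat-kernel estimates are standard; the delicate point is the precise bookkeeping of the logarithmic weight $|\ln(t/e^2)|^\sigma$ under the change of variables $t\sim 2^{-2q}$ — one must check that $|\ln(2^{2q}/e^2)|^\sigma \sim (3+q)^\sigma$ uniformly in $q\ge -1$ (which is why the shift "$3+q$" and the base point $e^2$ appear in Definition \ref{def4.1.1}), and that the logarithmic factor is slowly varying enough not to interfere with the essentially-monotone comparison on dyadic time intervals. Verifying these equivalences, and handling the boundary value $q=-1$ together with the small-$t$ versus $t$-near-$1$ regimes so that the constraint $t\in(0,1)$ rather than $t\in(0,\infty)$ causes no loss (here $s<0$ and $\sigma\ge0$ are exactly what is needed), is where the real care is required.
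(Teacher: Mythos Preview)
Your strategy is the same as the paper's: match the dyadic scale $2^{-2q}$ with the time $t$ via the heat-semigroup action on $\triangle_q f$, and observe that $|\ln(2^{-2q}/e^2)|^\sigma\sim(3+q)^\sigma$. The direction $(2)\Rightarrow(1)$ is carried out exactly as you describe.

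For $(1)\Rightarrow(2)$ the paper's execution differs from your sketch in a way worth noting. Rather than using the exponential decay $e^{-ct2^{2q}}$ and then appealing to ``interpolation or Minkowski'', the paper (i) discretizes time into dyadic intervals $I_{j_0}=(2^{-2j_0-2},2^{-2j_0}]$, (ii) for $t\in I_{j_0}$ uses the trivial bound for $j\le j_0$ and the smoothing bound $\|e^{t\Delta}\triangle_j f\|_p\lesssim t^{-N/2}2^{-jN}\|\triangle_j f\|_p$ for $j>j_0$, and (iii) then recognizes the resulting expression as a discrete convolution and applies Young's inequality in $\ell^r$. The reason this matters is the point you flag as the main obstacle: the ratio $(3+j_0)^\sigma/(3+j)^\sigma$ appearing for $0\le j\le j_0$ is \emph{not} uniformly bounded, so it does not fit a straight convolution kernel; the paper handles this by a further split at $j=[j_0/2]$, where on $[j_0/2]<j\le j_0$ the ratio is bounded and on $0\le j\le[j_0/2]$ one sacrifices half of the geometric decay $2^{-(j_0-j)|s|}$ to absorb it. Your ``Minkowski'' alone would only yield the $\ell^1$ (i.e.\ $r=1$) bound, and a bare interpolation argument would still have to confront this same logarithmic ratio, so in practice you would end up reproducing the paper's split.
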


\begin{proof}
 The idea of the proof mainly comes from \cite{Le02} and the proof is quite
 similar. But for readers convenience, we give the details as follows.
We denote by $C$ the constant depends on $n$ and might depend on
$s$, $\sigma$ and $r$ in the proof of this Lemma.

$\textsc{(1)}\Rightarrow\textsc{(2)}$.\ \  We write
$f=S_0f+\sum_{j\ge0}\triangle_jf$ with
 $$\|{S}_{0}f\|_p=\varepsilon_{-1},\quad \|\triangle_jf\|_p=2^{j|s|}(3+j)^{-\sigma}\varepsilon_j\hbox{ and } (\varepsilon_j)_{j\ge-1}\in \ell^r.$$
 We estimate the norm
 $t^{\frac{|s|}{2}}|\ln(\frac{t}{e^2})|^\sigma\|e^{t\Delta}f\|_p$ by
 \begin{align*}
  t^{\frac{|s|}{2}}|\ln(\frac{t}{e^2})|^\sigma\|e^{t\Delta}S_0f\|_p\le
  t^{\frac{|s|}{2}}|\ln(\frac{t}{e^2})|^\sigma\|e^{t\Delta}\tilde{S}_0S_0f\|_p\le
  Ct^{\frac{|s|}{2}}|\ln(\frac{t}{e^2})|^\sigma\|S_0f\|_p.
 \end{align*}
 Similarly, for $j\ge0$, we have
 \begin{align*}
  t^{\frac{|s|}{2}}|\ln(\frac{t}{e^2})|^\sigma\|e^{t\Delta}\triangle_jf\|_p\le
  Ct^{\frac{|s|}{2}}|\ln(\frac{t}{e^2})|^\sigma\|\triangle_jf\|_p.
 \end{align*}
 Moreover, when $j\ge0$ and $N\ge 0$, from the $L^1_x$ integrability of heat kernel and  Lemma \ref{lem4.2.1}, we have
 \begin{align}\nonumber
  t^{\frac{|s|}{2}}|\ln(\frac{t}{e^2})|^\sigma\|e^{t\Delta}\triangle_jf\|_p&
  =t^{\frac{|s|}{2}}|\ln(\frac{t}{e^2})|^\sigma
  \|e^{t\Delta}(-t\Delta)^{\frac{N}{2}}(-t\Delta)^{-\frac{N}{2}}\triangle_jf\|_p\\&
  \le  Ct^{\frac{|s|-N}{2}}|\ln(\frac{t}{e^2})|^\sigma2^{-jN}\|\triangle_jf\|_p\nonumber.
 \end{align}
Combining the above estimates, for some $N\ge2$ and any $0<t<1$ we
have
\begin{align*}
 \|e^{t\Delta}f\|_p&\le C\Big(
 \varepsilon_{-1}+\sum_{j\ge0}\min\Big\{{2^{j|s|}}{(j+3)^{-\sigma}}\varepsilon_j,\
 t^{-\frac{N}{2}}2^{-jN+j}(j+3)^{-\sigma}\varepsilon_j\Big
 \}\Big) \\
 &\le C\|(\varepsilon_j)_{j\ge-1}\|_{\ell^r}t^{-\frac{N}{2}}<\infty.
 \end{align*}
  Let $I_{k}=(2^{-2-2k}, 2^{-2k}]$.
 For any $t\in(0,1]=\cup_{k\ge0} I_k$, there exists an integer $j_0$ such that
 $t\in I_{j_0}$. And for  $t\in I_{j_0}$ we have
 \begin{align*}
  t^{\frac{|s|}{2}}|\ln(\frac{t}{e^2})|^\sigma\|e^{t\Delta}f\|_p\le& C \Big(
  \varepsilon_{-1}+\sum_{j=0}^{j_0}\frac{(3+j_0)^\sigma}{(3+j)^\sigma}2^{-(j_0-j)|s|}\varepsilon_j
  +\sum_{j=j_0+1}\!\!2^{(j_0-j)(|s|-N)}\varepsilon_j\Big)
  :=C\ \!\eta_{j_0}.
 \end{align*}
  From the above estimate we see that
  $$\|\|t^{\frac{|s|}{2}}|\ln(\frac{t}{e^2})|^\sigma e^{t\Delta}f\|_p\|_{L^r((0,1),\frac{dt}{t})}\le C\ \! \|(\eta_j)_{j\ge-1}\|_{\ell^r}\le C\
   \! \|(\varepsilon_j)_{j\ge-1}\|_{\ell^r}.$$
 Indeed, by using Young's inequality  we have
 \begin{align*}
  \sum_{j_0\ge-1}\Big(\sum_{j=0}^{j_0}\frac{(3+j_0)^\sigma}{(3+j)^\sigma}2^{(j-j_0)|s|}\varepsilon_j\Big)^r &\le
  C\!\!\sum_{j_0\ge-1}\Big(\sum_{j=0}^{[\frac{j_0}{2}]}2^{(j-[\frac{j_0}{2}])|s|}\varepsilon_j\Big)^r +
  C\sum_{j_0\ge-1}\Big(\sum_{j=[\frac{j_0}{2}]+1}^{j_0}\!\!2^{(j-j_0)|s|}\varepsilon_j\Big)^r
 \\&  \le C\ \! \|(\varepsilon_j)_{j\ge-1}\|_{\ell^r}^r\end{align*}
  and
  \begin{align*}
  \sum_{j_0\ge-1}\Big(\sum_{j=j_0+1}2^{(j_0-j)(|s|-N)}\varepsilon_j\Big)^r\le C \ \! \|(\varepsilon_j)_{j\ge-1}\|_{\ell^r}^r.
 \end{align*}

 $\textsc{(2)}\Rightarrow \textsc{(1)}$.\  \ We get that $S_0f=e^{-\frac{1}{2}\Delta}S_0e^{\frac{1}{2}\Delta}f\in
 L^p_x$ since the kernel of $e^{-\frac{1}{2}\Delta}S_0$ is $L^1_x$
 bounded. Similarly, when $j\ge0$, we write
 $\triangle_jf=e^{-t\Delta}\triangle_je^{t\Delta}f$. For any
 $j\ge0$, we choose $t$ such that $2^{-2-2j}<t<2^{-2j}$. Then  we have
 \begin{align*}
 2^{js}(3+j)^{\sigma}\|\triangle_jf\|_p\le Ct^{\frac{|s|}{2}}(2-\ln
 t)^\sigma\|e^{-t\Delta}\triangle_j e^{t\Delta}f\|_p\le C
 t^{\frac{|s|}{2}}|\ln(\frac{t}{e^2})|^\sigma\|e^{t\Delta}f\|_p.
 \end{align*}
 Consequently, we have
 \begin{align*}
   \|f\|_{B^{s,\sigma}_{p,r}}^r&=\sum_{j\ge-1}2^{jsr}(3+j)^{\sigma{r}}\|\triangle_jf\|^r_{p}
  \le C\Big(\sup_{0<t<1}t^{\frac{|s|}{2}}|\ln(\frac{t}{e^2})|^\sigma\|e^{t\Delta}f\|_p\Big)^r\\&
  \le
  C\|t^{\frac{|s|}{2}}|\ln(\frac{t}{e^2})|^\sigma\|e^{t\Delta}f\|_p\|^r_{L^r((0,1),\frac{dt}{t})},
 \end{align*}
 where the last inequality follows from a similar argument as in
 Lemma 16.1 of \cite{Le02}.
\end{proof}

Next we prove a bilinear estimate.

\begin{lemma}\label{lem4.4.2}
 Let $n\ge2$, $\varepsilon>0$, $p\in(\frac{n}{2},\infty)$ and $\mathfrak{J}_1$,$\mathfrak{J}_2$ be as in \eqref{eq4.3.0}. For $0<T\le1$,
 there
 exists $0<\nu<\nu(p)=\frac{2p-n}{2p}$ such that
\begin{align*}\left\{\begin{aligned}
 \sup_{t\in(0,T)}\!\!t^{\frac{1}{2}}|\ln(\frac{t}{e^2})|\|\mathfrak{J}_1(\vec{u},\theta)\|_\infty&\lesssim
 \|\vec{u}_0\|_{B^{-1,1}_{\infty,\infty}}
 +(\sup_{t\in(0,T)}\!\!\!t^{\frac{1}{2}}|\ln(\frac{t}{e^2})|\|\vec{u}\|_\infty)^2+ T^\nu\!\!\!\!\sup_{t\in(0,T)}\!\!\!
 t^{\frac{1}{2}}|\ln(\frac{t}{e^2})|^{\varepsilon}
 \|\theta\|_p\\
\sup_{t\in(0,T)}\!\!t^{\frac{1}{2}}|\ln(\frac{t}{e^2})|^\varepsilon\|\mathfrak{J}_2(\vec{u},\theta)\|_p&\lesssim \|\theta_0\|_{B^{-1,\varepsilon}_{p,\infty}}
 +\frac{1}{\varepsilon}\sup_{t\in(0,T)}t^{\frac{1}{2}}|\ln(\frac{t}{e^2})|\|\vec{u}\|_\infty\sup_{t\in(0,T)}t^{\frac{1}{2}}|\ln(\frac{t}{e^2})|^{\varepsilon}
 \|\theta\|_p \ \ .
\end{aligned}\right.\end{align*}
\end{lemma}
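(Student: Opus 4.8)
The plan is to estimate each of the two inequalities separately by working on the Duhamel formula \eqref{eq4.3.0} and running pointwise-in-time bounds, exactly in the spirit of the Kato-type argument. For the first inequality, I would split $\mathfrak{J}_1(\vec u,\theta)$ into the three contributions: the linear heat flow $e^{t\Delta}\vec u_0$, the bilinear term $-\int_0^t e^{(t-\tau)\Delta}\mathbb P\nabla\cdot(\vec u\otimes\vec u)\,d\tau$, and the forcing term $\int_0^t e^{(t-\tau)\Delta}\mathbb P(\theta\vec a)\,d\tau$. For the linear part I would invoke Lemma \ref{lem4.4.1} with $s=-1$, $\sigma=1$, $p=\infty$, which is precisely the assertion that $\sup_{0<t<1}t^{1/2}|\ln(t/e^2)|\,\|e^{t\Delta}\vec u_0\|_\infty \lesssim \|\vec u_0\|_{B^{-1,1}_{\infty,\infty}}$ (here I need the easy direction, using only $0<T\le 1$). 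For the bilinear term I would use $\|e^{(t-\tau)\Delta}\mathbb P\nabla f\|_\infty \lesssim (t-\tau)^{-1/2}\|f\|_\infty$ (the Oseen-kernel/heat-kernel estimate already cited in the proof of Lemma \ref{lem4.3.2}), insert $\|(\vec u\otimes\vec u)(\tau)\|_\infty \le \|\vec u(\tau)\|_\infty^2 \le M_u^2\,\tau^{-1}|\ln(\tau/e^2)|^{-2}$ where $M_u:=\sup_{t\in(0,T)}t^{1/2}|\ln(t/e^2)|\,\|\vec u\|_\infty$, and then carry out the time integral $\int_0^t (t-\tau)^{-1/2}\tau^{-1}|\ln(\tau/e^2)|^{-2}\,d\tau$. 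The point is that this integral is $\lesssim t^{-1/2}|\ln(t/e^2)|^{-1}$; the logarithmic gain is exactly what makes the endpoint $s=-1$ work and is the step I expect to require the most care (one splits $[0,t]$ at $t/2$, using on $[t/2,t]$ the bound $(t-\tau)^{-1/2}$ integrated against the slowly varying factor $t^{-1}|\ln(t/e^2)|^{-2}$, and on $[0,t/2]$ the bound $(t/2)^{-1/2}\int_0^{t/2}\tau^{-1}|\ln(\tau/e^2)|^{-2}\,d\tau$, the last integral converging and producing the single power of $|\ln(t/e^2)|^{-1}$). Multiplying back by $t^{1/2}|\ln(t/e^2)|$ gives the claimed $M_u^2$ bound.

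For the forcing term $\int_0^t e^{(t-\tau)\Delta}\mathbb P(\theta\vec a)\,d\tau$ I would use Bernstein/heat-smoothing in the form $\|e^{(t-\tau)\Delta}g\|_\infty \lesssim (t-\tau)^{-n/(2p)}\|g\|_p$ together with $\|\theta(\tau)\|_p \le M_\theta\,\tau^{-1/2}|\ln(\tau/e^2)|^{-\varepsilon}$, where $M_\theta:=\sup_{t\in(0,T)}t^{1/2}|\ln(t/e^2)|^\varepsilon\|\theta\|_p$. The time integral is then $\int_0^t (t-\tau)^{-n/(2p)}\tau^{-1/2}|\ln(\tau/e^2)|^{-\varepsilon}\,d\tau$; since $p>n/2$ we have $n/(2p)<1$, so this is a convergent Beta-type integral bounded by $C\,t^{1-n/(2p)-1/2}|\ln(t/e^2)|^{-\varepsilon}= C\,t^{\nu-1/2}|\ln(t/e^2)|^{-\varepsilon}$ with $\nu=\nu(p)=\frac{2p-n}{2p}$ (one loses nothing from the log, and in fact can absorb a factor $T^\nu$ since $t\le T\le 1$; more carefully, a slightly smaller $0<\nu<\nu(p)$ is taken so that one extra power $|\ln|^{1-\varepsilon}$ is controlled by $T^{\nu(p)-\nu}$). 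Multiplying by $t^{1/2}|\ln(t/e^2)|$ and using $|\ln(t/e^2)|^{1-\varepsilon}\lesssim t^{-(\nu(p)-\nu)}$ for $t\le T$ yields the $T^\nu M_\theta$ term. This handles the first inequality.

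For the second inequality I proceed the same way with $\mathfrak{J}_2(\vec u,\theta)=e^{t\Delta}\theta_0-\int_0^t e^{(t-\tau)\Delta}\nabla\cdot(\vec u\theta)\,d\tau$. The linear part is handled by Lemma \ref{lem4.4.1} with $s=-1$, $\sigma=\varepsilon$, giving $\sup_{0<t<T}t^{1/2}|\ln(t/e^2)|^\varepsilon\|e^{t\Delta}\theta_0\|_p \lesssim \|\theta_0\|_{B^{-1,\varepsilon}_{p,\infty}}$. For the bilinear term I use $\|e^{(t-\tau)\Delta}\nabla g\|_p \lesssim (t-\tau)^{-1/2}\|g\|_p$ and $\|(\vec u\theta)(\tau)\|_p \le \|\vec u(\tau)\|_\infty\|\theta(\tau)\|_p \le M_u M_\theta\,\tau^{-1}|\ln(\tau/e^2)|^{-1-\varepsilon}$, and must estimate $\int_0^t (t-\tau)^{-1/2}\tau^{-1}|\ln(\tau/e^2)|^{-1-\varepsilon}\,d\tau$. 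Again splitting at $t/2$: the near-diagonal piece contributes $\lesssim t^{-1/2}|\ln(t/e^2)|^{-1-\varepsilon}$, and the piece over $[0,t/2]$ contributes $(t/2)^{-1/2}\int_0^{t/2}\tau^{-1}|\ln(\tau/e^2)|^{-1-\varepsilon}\,d\tau \lesssim t^{-1/2}\cdot \frac{1}{\varepsilon}|\ln(t/e^2)|^{-\varepsilon}$, where the factor $\frac{1}{\varepsilon}$ comes from $\int \tau^{-1}|\ln(\tau/e^2)|^{-1-\varepsilon}\,d\tau = \frac{1}{\varepsilon}|\ln(\tau/e^2)|^{-\varepsilon}$. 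Hence the time integral is $\lesssim \frac{1}{\varepsilon}t^{-1/2}|\ln(t/e^2)|^{-\varepsilon}$; multiplying by $t^{1/2}|\ln(t/e^2)|^\varepsilon$ yields the $\frac1\varepsilon M_u M_\theta$ term, completing the proof. The main obstacle throughout is the bookkeeping of the logarithmic weights in the two time integrals near $\tau=0$ and near $\tau=t$: this is where the negative-order endpoint $s=-1$ is felt, and where the explicit dyadic-in-$t$ splitting, together with convergence of $\int_0 \tau^{-1}|\ln\tau|^{-1-\delta}\,d\tau$ for $\delta>0$ but divergence for $\delta=0$, forces the appearance of the extra logarithm in $B^{-1,1}_{\infty,\infty}$ and of the factor $1/\varepsilon$.
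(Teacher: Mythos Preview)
Your proposal is correct and follows essentially the same approach as the paper: invoke Lemma~\ref{lem4.4.1} for the linear pieces, use the Oseen/heat kernel decay $\|e^{(t-\tau)\Delta}\mathbb{P}\nabla f\|_q\lesssim(t-\tau)^{-1/2}\|f\|_q$ and $\|e^{(t-\tau)\Delta}\mathbb{P}g\|_\infty\lesssim(t-\tau)^{-n/(2p)}\|g\|_p$ for the Duhamel terms, and handle the resulting time integrals by splitting at $t/2$, the key identity being $\int_0^{t/2}\tau^{-1}|\ln(\tau/e^2)|^{-1-\varepsilon}\,d\tau=\frac{1}{\varepsilon}|\ln(\frac{t}{2e^2})|^{-\varepsilon}$. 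The only difference is that the paper defers the $\vec u\otimes\vec u$ bilinear estimate to \cite{Cui11}, whereas you sketch it directly; your sketch is correct and is exactly the computation carried out there.
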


\begin{proof}
The term
$\int_0^te^{(t-\tau)\Delta}\mathbb{P}\nabla\cdot(\vec{u}\otimes\vec{u})d\tau$
is already treated in Lemma 2.5 of \cite{Cui11}. From Lemma
\ref{lem4.4.1} we see that
\begin{align*} \sup_{t\in(0,1)}\!\!t^{\frac{1}{2}}|\ln(\frac{t}{e^2})|\|e^{t\Delta}\vec{u}_0\|_\infty\sim \|\vec{u}_0\|_{B^{-1,1}_{\infty,\infty}},
\quad
\sup_{t\in(0,1)}\!\!t^{\frac{1}{2}}|\ln(\frac{t}{e^2})|^\varepsilon\|e^{t\Delta}\theta_0\|_p\sim
\|\theta_0\|_{B^{-1,\varepsilon}_{p,\infty}}.\end{align*} Therefore,
it remains to estimate
\begin{align*}
\int_0^te^{(t-\tau)\Delta}\mathbb{P}(\theta\vec{a})d\tau, \quad
 \int_0^te^{(t-\tau)\Delta}\nabla\cdot(\vec{u}\theta)d\tau.\end{align*}
 By using the decay estimates of the Oseen kernel (cf. \cite{Le02}, Proposition 11.1) we see that
 \begin{align*}
 t^{\frac{1}{2}}|\ln(\frac{t}{e^2})| \|\int_0^t&e^{(t-\tau)\Delta}\mathbb{P}(\theta\vec{a})d\tau\|_\infty\lesssim
  t^{\frac{1}{2}}|\ln(\frac{t}{e^2})|\int^t_0(t-\tau)^{-\frac{n}{2p}}\|\theta(\tau)\|_{p}d\tau
 \\ &\lesssim
  t^{\frac{1}{2}}|\ln(\frac{t}{e^2})|\int^t_0(t-\tau)^{-\frac{n}{2p}}\tau^{-\frac{1}{2}}|\ln(\frac{\tau}{e^2})|^{-\varepsilon}d\tau
   \sup_{\tau\in(0,T)}\!
 \tau^{\frac{1}{2}}|\ln(\frac{\tau}{e^2})|^{\varepsilon}\|\theta(\tau)\|_p
 \\ &\lesssim  t^{\nu(p)}|\ln(\frac{t}{e^2})|^{1-\varepsilon}
   \sup_{\tau\in(0,T)}\!
 \tau^{\frac{1}{2}}|\ln(\frac{\tau}{e^2})|^{\varepsilon}\|\theta(\tau)\|_p\\&\lesssim
  T^\nu\tau^{\frac{1}{2}}|\ln(\frac{\tau}{e^2})|^{\varepsilon}\|\theta(\tau)\|_p,
 \end{align*}
where $\nu\in(0,\nu({p}))$ and
$\lim_{t\downarrow0}t^{\nu(p)-\nu}|\ln(\frac{t}{e^2})|^{1-\varepsilon}=0$.
 By applying the decay estimates of the heat kernel (is Oseen kernel too) (cf. \cite{Le02}, Proposition 11.1) we see that
 \begin{align*}
 &t^{\frac{1}{2}}|\ln(\frac{t}{e^2})|^\varepsilon\|\int_0^te^{(t-\tau)\Delta}\nabla\cdot(\theta\vec{u})d\tau\|_p\lesssim
  t^{\frac{1}{2}}|\ln(\frac{t}{e^2})|^\varepsilon\int^t_0(t-\tau)^{-\frac{1}{2}}\|\theta(\tau)\|_{p}\|\vec{u}(\tau)\|_{\infty}d\tau
 \\ \!&\lesssim\!\frac{\sqrt t}{\varepsilon}
  |\ln(\frac{t}{e^2})|^\varepsilon\!\!
 \int^t_0\!(t-\tau)^{-\frac{1}{2}}\tau^{-1}|\ln(\frac{\tau}{e^2})|^{-1-\varepsilon}d\tau\!\!
   \sup_{\tau\in(0,T)}\!\! \tau^{\frac{1}{2}}|\ln(\frac{\tau}{e^2})|^{\varepsilon}\|\theta(\tau)\|_p \sup_{\tau\in(0,T)}\!\!\!
 \tau^{\frac{1}{2}}|\ln(\frac{\tau}{e^2})|\|\vec{u}(\tau)\|_\infty
 \\ &\lesssim  \frac{1}{\varepsilon}  \sup_{\tau\in(0,T)}\!
 \tau^{\frac{1}{2}}|\ln(\frac{\tau}{e^2})|^{\varepsilon}\|\theta(\tau)\|_p \sup_{\tau\in(0,T)}\!
 \tau^{\frac{1}{2}}|\ln(\frac{\tau}{e^2})|\|\vec{u}(\tau)\|_\infty,
 \end{align*}
where $t^{\frac{1}{2}}|\ln(\frac{t}{e^2})|^\varepsilon
 \int^t_0(t-\tau)^{-\frac{1}{2}}\tau^{-1}|\ln(\frac{\tau}{e^2})|^{-1-\varepsilon}d\tau\lesssim \frac{1}{\varepsilon}$. Indeed, it suffices to show
 $$t^{\frac{1}{2}}|\ln(\frac{t}{e^2})|^\varepsilon
 \int^{t/2}_0(t-\tau)^{-\frac{1}{2}}\tau^{-1}|\ln(\frac{\tau}{e^2})|^{-1-\varepsilon}d\tau\lesssim \frac{1}{\varepsilon},$$
 which is equivalent to
 $$|\ln(\frac{t}{e^2})|^\varepsilon
 \int^{t/2}_0\tau^{-1}|\ln(\frac{\tau}{e^2})|^{-1-\varepsilon}d\tau\lesssim \frac{1}{\varepsilon}
 \Leftrightarrow\int_{|\ln(\frac{t}{e^2})|}^\infty
 x^{-1-\varepsilon}dx\lesssim \frac{1}{\varepsilon}|\ln(\frac{t}{e^2})|^{-\varepsilon}.$$
 \end{proof}

\textbf{Proof of Theorem \ref{thm4.1.4}:} Applying Lemmas
\ref{lem4.3.1} and \ref{lem4.4.2}  and following similar arguments
as in \cite{Cui11}, we prove Theorem \ref{thm4.1.4}. We omit the
details here.


\bibliographystyle{amsplain}
\end{document}